\def\bft{{\bf t}}
\def\pr{\mathop{\rm pr}\nolimits}
\def\cov{\mathop{\rm cov}\nolimits}
\def\var{\mathop{\rm var}\nolimits}
\def\Ga{\mathop{\rm Ga}\nolimits}
\def\bfX{{\bf X}}
\def\indep{\mathrel{\rlap{$\perp$}\kern1.6pt\mathord{\perp}}}
\def\Real{\mathbb{R}}
\def\binomial#1#2{\left(\begin{array}{c} {#1} \\ {#2} \end{array} \right)}
\def\ascf#1#2{{#1}^{\uparrow #2}}
\def\given{\,|\,}
\def\Rsharp{R^\sharp}
\def\Bsharp{B^\sharp}
\def\P{{\cal P}}
\def\S{{S}}
\def\PR{{\cal OP}}
\def\OP{{\cal OP}}
\def\beginignoretext{\setbox0=\vbox\bgroup}
\def\endignoretext{\egroup}
\numberwithin{equation}{section}
\theoremstyle{plain}
\newtheorem{theorem}{Theorem}[section]
\newtheorem{example}{Example}
\newtheorem{definition}[theorem]{Definition}
\newtheorem{lemma}[theorem]{Lemma}
\newtheorem{proposition}[theorem]{Proposition}
\begin{document}
\title{Weak continuity of predictive distribution for \\ Markov survival processes}
\author{Walter Dempsey and Peter McCullagh
\thanks{Department of Statistics,
University of Chicago, 5734
University Ave, Chicago, Il 60637, U.S.A.}}
\maketitle
\begin{abstract}\noindent
We explore the concept of a consistent exchangeable survival process - 
a joint distribution of survival times in which the risk set evolves as a 
continuous-time Markov process with homogeneous transition rates.  We show 
a correspondence with the de Finetti approach of constructing an exchangeable 
survival process by generating iid survival times conditional on a completely 
independent hazard measure.  We describe several specific processes, showing 
how the number of blocks of tied failure times grows asymptotically with the 
number of individuals in each case.  
In particular, we show that the set of Markov survival processes 
with weakly continuous predictive distributions can be characterized by
a two-dimensional family called the harmonic process.
We end by applying these methods to data, 
showing how they can be easily extended to handle censoring.
\end{abstract}

\section{Introduction}
\subsection{Background}
This paper introduces the concept of a consistent exchangeable
survival process, which is a joint distribution $p_n$ of survival times
$T_1,\ldots, T_n$ such that the temporal progress of the
risk set is Markovian with homogeneous transition rates.
The approach is similar to that taken by
Kingman (1980) or Aldous (1996) in constructing coalescent-like trees,
or by McCullagh, Pitman and Winkel (2006)
in characterizing self-similar Gibbs fragmentation trees.
Each consistent survival process is generated by a splitting rule 
which determines the joint distribution 
for fixed $n$ and the predictive distributions,
$\pr(T_{n+1} \in A \given T_1,\ldots, T_n)$.
We show that each splitting rule is defined by a corresponding
characteristic index, $\zeta = \{ \zeta_i \}_{i=1}^\infty$. 
In particular, the marginal distributions are exponential with rate~$\zeta_1$.

Three closely related processes in the class are explored in detail,
the power process for which $\zeta_n \propto n^\rho$ for $0 < \rho < 1$,
the gamma process for which $\zeta_n \propto \log(1+n/\rho)$ and
the harmonic process for which $\zeta_n \propto \sum_{j=0}^{n-1} 1/(\rho+j)$.
The latter has a simple form for the joint density,
it is easy to generate sequentially, and
the conditional distributions have a close affinity with
various long-standing estimators for the survival distribution,
such as the Kaplan-Meier estimator (Kaplan and Meier, 1958).
Moreover, the harmonic process is shown to be the \emph{only} 
Markov survival process with weakly continuous predictive distributions.

The de~Finetti approach to constructing an exchangeable survival process
is to generate survival times conditionally independent and identically
distributed given a completely independent hazard measure,
i.e.~the cumulative conditional hazard is a L\'evy process
(Cornfield and Detre, 1977; Kalbfleisch, 1978; Hjort, 1990; Clayton, 1991).
Such processes are sometimes called \emph{neutral to the right}
(Doksum, 1974; James, 2006).
Each process in this class is determined by the characteristic exponent
of the L\'evy measure, and the temporal evolution of the risk set
is Markovian, i.e.,~they are exchangeable Markov survival processes.
Thus, the two approaches, which are mathematically natural in very
different ways, give rise to the same set of exchangeable processes.
The probability density can be computed using the characteristic index,
and the process can be simulated directly from the predictive distributions,
by-passing the L\'evy process entirely.

A survival process in this class typically has multiple tied failure times,
and these ties generate an exchangeable partial ranking,
or an ordered partition, of the particles.
The partial ranking is infinitely exchangeable, and can be generated by a 
sequential assignment rule similar to the Chinese restaurant process
for the Dirichlet partition
(Ferguson, 1973, 1974; Pitman, 2006).
As a function of $n$, the number of blocks may be bounded,
or it may grow at a logarithmic or super-logarithmic rate,
or it may grow at rate $n^\alpha$ for some $0<\alpha \le 1$.
For the harmonic and the gamma processes, the rate is 
asymptotically proportional to $\log^2 n$.


\subsection{Risk set trajectory}
Consider a set of patients or particles in which particle~$i$ survives
to time $T_i > 0$, not necessarily finite.
To each particle there corresponds an observation interval $[0, c_i]$,
and the event time recorded for particle~$i$ is $Y_i = \min(T_i, c_i)$.
In all of the mathematical theory that follows, the censoring times
are arbitrary positive numbers and are known for each particle, so if $Y_i = c_i$
the event is known to be a censoring time;
otherwise, if $Y_i < c_i$, the event is known to be a death or failure.
Evidently, setting $c_i=0$ is equivalent to removing particle~$i$
from the sample.

It is helpful mathematically to think of the observation on particle~$i$
not as a real number but as a survival interval
\begin{eqnarray*}
R_i &=& [0, Y_i) \quad\hbox{if}\quad Y_i < c_i\\
R_i &=& [0, Y_i] \quad\hbox{if}\quad Y_i = c_i.
\end{eqnarray*}
Alternatively, and equivalently, the observation is a Boolean function such that
$R_i(t) = 1$ if $t \in R_i$  and zero otherwise.
With this notation
\[
R(t) = \{i : R_i(t) = 1\}
\]
is the subset of particles known to be alive at time~$t$,
i.e.,~the risk set at time~$t$.
The temporal trajectory of $R$ is a non-increasing sequence of subsets
beginning at $R(0) = [n] = \{ 1, \ldots, n \}$, which is the sample of particles initially under observation.
The right increment $R(t) \setminus R(t^+)$ is the
subset of particles censored at time~$t$;
the left increment $B(t)=R(t^-) \setminus R(t)$ is the
subset of particles observed to fail at time~$t$.
It is worth emphasizing that the subsets $B(t)$ and $R(t)$ 
are disjoint for every~$t$.

The number of individuals in the risk set $\Rsharp(t) = \#R(t)$
is a non-increasing real-valued function, beginning at $\Rsharp(0) = n$,
decreasing in integral steps at each censoring and failure time
to $\Rsharp(t) = 0$ for $t > \max(Y)$.
The right increment $\Rsharp(t) - \Rsharp(t^+)$ counts censored particles;
the left increment $\Rsharp(t^-) - \Rsharp(t)$ counts failures,
and is regarded as a right-continuous counting measure.


This paper is concerned with survival processes,
by which is meant probabilistic models for the temporal
evolution of the risk set, i.e.,~stochastic models in continuous time
with state space equal to the subsets of~$[n]$.
Since $n$~is arbitrary, each stochastic process must be defined
in a consistent manner for general samples in such a way that
$p_n$ is the marginal distribution of $p_{n+1}$ under deletion,
either by setting $c_{n+1} = 0$ or by removal from the sample.
Censoring is not ignored, but the discussion presumes $c_i=\infty$
to avoid unnecessary distractions.
Although it affects the evolution of the risk set, censoring has a
relatively trivial effect on all probability calculations
and associated statistical procedures such as
parameter estimation and the calculation of conditional distributions
such as the survival function $\pr(T_{n+1} > t \given R[n])$ or the residual
survival function $\pr(T_{n+1} >t \given R[n], T_{n+1} > s)$, 
where $R[n]$ denotes the risk set trajectory restricted to the 
set of particles, $[n]$.
The natural convention on left and right increments is consistent with
standard practice (Kalbfleisch, 1978, Table~1), and is sufficient
for procedures to be expressed in the generality required to
accommodate arbitrary fixed right censoring.

\section{Exchangeable Markov survival processes}
\subsection{Partial rankings}
A partial ranking of $[n]$ is an \emph{ordered list} $B=(B_1,\ldots, B_k)$
consisting of disjoint non-empty subsets of $[n]$ whose union is~$[n]$.
The terms \emph{partial ranking} and \emph{ordered partition} are used interchangeably.
The elements of $[n]$ are unordered within blocks, but $B_1$ is the subset
ranked first, $B_2$ is the subset ranked second, and so on.
The alternative notation $B=B_1|B_2|\cdots|B_k$ is sometimes used,
either for ordered or unordered partitions.

There are three partial rankings of $[2]$, 12 of $[3]$, and 75 of $[4]$,
arranged in equivalence classes (with respect to permutation of elements
and permutation of unequal-sized blocks) as follows;
\begin{eqnarray*}
\OP_2:&& 12;\quad 1|2,\; 2|1 \cr
\OP_3:&& 123;\quad 12|3, 13|2, 23|1,\; 1|23, 2|13, 3|12;\quad 1|2|3 (3!)\cr
\OP_4:&& 1234;\quad 123|4(4\times2);\quad 12|34(6);\quad  12|3|4(12\times3);\quad 1|2|3|4(4!)
\end{eqnarray*}
Thus $12|3|4(12\times3)$ means that there are three equivalence classes of types
$211, 121, 112$ depending on whether the first, second or third block contains two elements,
and that each class contains 12 partial rankings generated by permutation of elements.

To each unordered partition of $[n]$ containing $k$ blocks,
there correspond $k!$ partial rankings,
one for each ordering of the blocks.
Thus, if $\S_{n,k}$ is number of partitions of $[n]$ containing $k$ blocks,
i.e.,~Stirling's number of the second kind,
then
\[
\#\P_n = \sum_{k=1}^n \S_{n,k},\qquad
\#\PR_n = \sum_{k=1}^n k!\, \S_{nk}
\]
is the number of partitions of $[n]$ and
the number of partial rankings of~$[n]$ respectively.
The numerical values for $n\le 10$ are as follows:
\[
\arraycolsep=4pt
\begin{array}{rrrrrrrrrrr}
n&1&2&3&4&5&6&7&8&9&10\cr
\#\P_n& 1& 2& 5& 15& 52& 203& 877& 4\,140& 21\,147& 115\,975 \cr 
\#\OP_n& 1& 3& 13& 75& 541& 4\,683& 47\,293& 545\,835& 7\,087\,261& 102\,247\,563\cr
\end{array}
\]

\subsection{Exchangeable Markov partial rankings}
A random partial ranking of $[n]$ is a probability distribution $p_n(\cdot)$ on
the finite set $\PR_n$.
The distribution is \emph{finitely exchangeable} if $p_n(\cdot)$
depends only on the block sizes and the block order:
in general $p_n(B_1,\ldots, B_k) \neq p_n(B_{\sigma(1)},\ldots, B_{\sigma(k)})$
for a permutation $\sigma$ of the blocks.

\begin{definition}[Markovian] 
The distribution $p_n$ is said to be \emph{Markovian} if
for each subset of size $1\le m \le n$ there exists
a splitting distribution $q(s, d)$ with $s+d = m$ and $d \ge 1$ such that
the partial ranking $B=(B_1,\ldots,B_k)$ of $[n]$
consisting of $k$ blocks of sizes $\Bsharp_1,\ldots ,\Bsharp_k$ has probability
\begin{eqnarray*}
p_n(B) &=& q(n-\Bsharp_1, \Bsharp_1) \times p_{n-\Bsharp_1}(B_2,\ldots,B_k) \cr
	&=& \prod_{j=1}^k  q(n-\Bsharp_1-\cdots-\Bsharp_{j}, \Bsharp_j).
\end{eqnarray*}
\end{definition}

The following proposition is a direct consequence of the above definition.

\begin{proposition}
Every non-negative function, $q(r,s)>0$ for $s>0$, subject to normalization
\begin{equation}\label{sumconstraint}
\sum_{d=1}^n \binomial n d q(n-d, d) = 1
\end{equation}
for all $n \geq 0$ defines a Markovian survival process.
\end{proposition}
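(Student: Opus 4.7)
The plan is to verify by induction on $n$ that the product formula in the Markovian definition produces a genuine probability distribution on the finite set $\OP_n$ of partial rankings of $[n]$; nonnegativity is automatic from $q \geq 0$, so the content is that the total mass is one. For the base case $n=1$, the set $\OP_1 = \{(\{1\})\}$ is a singleton and the formula gives $p_1(\{1\}) = q(0,1)$, which equals $1$ by the $n=1$ instance of~(\ref{sumconstraint}).

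For the inductive step, I would partition $\OP_n$ according to the first block: every ordered partition $(B_1,\ldots,B_k)$ of $[n]$ is uniquely determined by a nonempty subset $B_1 \subseteq [n]$ together with an ordered partition $(B_2,\ldots,B_k)$ of the complement $[n]\setminus B_1$. The recursion factors as
\[
p_n(B_1,\ldots,B_k) = q(n-\Bsharp_1,\Bsharp_1)\, p_{n-\Bsharp_1}(B_2,\ldots,B_k),
\]
and summing over $\OP_n$, applying the inductive hypothesis to collapse the inner sum, and grouping subsets $B_1$ by their size $d$ yields
\[
\sum_{B \in \OP_n} p_n(B) \;=\; \sum_{\emptyset \neq B_1 \subseteq [n]} q(n-\Bsharp_1,\Bsharp_1) \;=\; \sum_{d=1}^n \binomial{n}{d} q(n-d,d) \;=\; 1
\]
by the normalization~(\ref{sumconstraint}), completing the induction.

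The argument is essentially bookkeeping; the only point requiring care is that the reduction from ordered partitions of $[n]$ to ordered partitions of $[n]\setminus B_1$ is well-posed, which holds because the Markovian formula depends on the remaining blocks only through their sizes and is therefore invariant under any bijection $[n]\setminus B_1 \leftrightarrow [n-\Bsharp_1]$. I expect no substantive obstacle beyond verifying that this invariance lets the inductive hypothesis be applied to the subpopulation without ambiguity.
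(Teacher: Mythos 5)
Your induction is correct and is precisely the verification the paper leaves implicit: the paper states this proposition without proof, merely as ``a direct consequence of the above definition.'' Your decomposition of $\OP_n$ by the first block, the use of the size-only dependence to apply the inductive hypothesis to the subpopulation $[n]\setminus B_1$, and the final appeal to the normalization identity supply exactly the bookkeeping the authors omit, so there is nothing further to add.
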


In the context of survival models,
the first argument of $q$ is the number of survivors,
and the second is the number of failures at each successive hazard,
so $q(s, d) \neq q(d, s)$ is not symmetric in its arguments.
In this sense, the splitting distribution for partial rankings is very different
from the splitting distribution for Markov fragmentation trees
(Aldous 1996; McCullagh, Pitman and Winkel 2006).
The normalization conditions are also different.

\subsection{Kolmogorov consistency}
A sequence $p = (p_1, p_2,\ldots)$ in which $p_n$ is a probability distribution on
partial rankings of $[n]$ is said to be Kolmogorov-consistent if
each $p_n$ is the marginal distribution of $p_{n+1}$ under
the map $\PR_{n+1}\to\PR_n$ in which the element $n+1$ is ignored or deleted.
This condition implies that the censoring time for one particle, $c_{n+1}$, for example,
has no effect on the survival times for other particles.
The concern here is the conditions for consistency of exchangeable Markov partial rankings,
so the choice of element to be deleted is immaterial.
Without consistency, the sequence $\{p_n\}$ does not determine a process,
there is no associated partial ranking of the infinite set (population),
and there is no possibility of inference using conditional distributions.

Consider the family of Markov partial rankings generated by
a splitting distribution $q(\cdot, \cdot)$.  Proposition~\ref{consistencycondition_prop}
details when such a splitting distribution corresponds with a Kolmogorov-consistent
Markov survival process.

\begin{proposition}[Consistency condition] \label{consistencycondition_prop}
The splitting distribution corresponds to a Kolmogorov-consistent
Markov survival process if and only if
\begin{equation}\label{consistencycondition_eq}
(1 - q(n, 1)) q(n-d, d) = q(n-d, d+1) + q(n-d+1, d)
\end{equation}
for all integers $n\ge d\ge 1$.  In particular, equation~(\ref{consistencycondition_eq})
implies that the splitting probabilities are determined by the sequence of singleton
splits $q(n,1)$ for $n \geq 0$.
\end{proposition}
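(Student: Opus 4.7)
The plan is to fix $B=(B_1,\ldots,B_k)$, enumerate the partial rankings $B'$ of $[n+1]$ that reduce to $B$ upon deletion of $n+1$, and compare $\sum_{B'\mapsto B} p_{n+1}(B')$ with $p_n(B)$. Every such $B'$ falls into exactly one of three disjoint classes, determined by its first block: (a) $\{n+1\}$, forcing $B'=(\{n+1\},B_1,\ldots,B_k)$; (b) $B_1\cup\{n+1\}$, which determines $B'$ uniquely; or (c) $B_1$ itself, in which case $n+1$ lies in some later block and the tail of $B'$ on the remaining $s_1+1$ particles (where $s_1 := n-\Bsharp_1$) is a ranking that reduces to $(B_2,\ldots,B_k)$. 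Applying the Markov factorization to each class yields
\[
\sum_{B'\mapsto B} p_{n+1}(B') = q(n,1)\,p_n(B) + q(s_1,\Bsharp_1+1)\,p_{s_1}(B_2,\ldots,B_k) + q(s_1+1,\Bsharp_1)\sum_{B'_{\text{rest}}\mapsto(B_2,\ldots,B_k)} p_{s_1+1}(B'_{\text{rest}}).
\]

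For the forward direction I induct on $k$, with base case $k=0$ (the empty ranking) reducing to the normalization $q(0,1)=1$. For $k\ge 1$, the inductive hypothesis collapses the final sum to $p_{s_1}(B_2,\ldots,B_k)$; dividing through by this common factor and using $p_n(B)=q(s_1,\Bsharp_1)\,p_{s_1}(B_2,\ldots,B_k)$, the consistency requirement collapses to exactly equation~(\ref{consistencycondition_eq}) with $d=\Bsharp_1$. For the reverse direction, I read (\ref{consistencycondition_eq}) at $d=n$ off the one-block case $B=([n])$, and for $1\le d<n$ off the two-block case $(B_1,B_2)$ with $|B_1|=d$, dividing out the nonzero factor $q(0,n-d)$.

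Finally, equation~(\ref{consistencycondition_eq}) rearranges to $q(n-d,d+1) = (1-q(n,1))\,q(n-d,d) - q(n-d+1,d)$, so induction on $d$, starting from the prescribed singleton splits $q(\cdot,1)$, determines every $q(s,d)$ uniquely.

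I anticipate the main obstacle to be the careful enumeration underlying case~(c): verifying that the three classes are disjoint and exhaustive, and tracking that the shifted survivor counts $s_j+1$ in the prefix where $n+1$ has not yet been popped off correctly line up with those in $B$ once $n+1$ is deleted, so that the inductive hypothesis can be applied without any extra correction term.
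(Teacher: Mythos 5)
Your proposal is correct and follows essentially the same route as the paper's proof: the same three-way enumeration of the liftings of $B$ according to whether $n+1$ forms a singleton first block, joins $B_1$, or falls later, followed by collapsing the third term via lower-level consistency and dividing out the common factor. The only differences are organizational (induction on the number of blocks rather than on $n$, and an explicit treatment of the converse and of the recursion determining $q(\cdot,d)$ from the singleton splits), which the paper leaves implicit.
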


\begin{proof}
To derive the conditions for consistency proceed by induction
supposing that $p_1,\ldots, p_n$ are mutually consistent in the Kolmogorov sense.
Then, in order for $p_{n+1}$ to be consistent with $p_n$, it must be
for each ordered partition $B$ of $[n]$,
\begin{eqnarray*}
p_n(B) = q(n, 1) p_n(B) &+& q(n-\Bsharp_1, \Bsharp_1+1) p_{n-\Bsharp_1}(B_2,\ldots) \cr
 &+& q(n-\Bsharp_1+1, \Bsharp_1) p_{n-\Bsharp_1+1}(\cdots).
\end{eqnarray*}
The terms appearing on the right are all of the events in $\PR_{n+1}$
such that deletion of $n+1$ gives rise to the ordered partition $B\in\PR_n$.
Either $n+1$ occurs in the first block as a singleton,
which has probability $q(n, 1) p_n(B)$;
or it occurs appended to $B_1$ as a non-singleton,
which has probability $q(n-\Bsharp_1+1, \Bsharp_1+1) p_{n-\Bsharp_1}(B_2,\ldots)$;
or it occurs elsewhere either as a singleton or 
appended to one of the blocks of $B$,
which occurs with probability $q(n-\Bsharp_1+1, \Bsharp) p_{n-\Bsharp_1+1}(\cdots)$.
But, by the induction hypothesis that $p_1,\ldots, p_n$ are mutually consistent,
the latter event occurs with probability $p_{n-\Bsharp_1}(B_2,\ldots)$.
Hence, an exchangeable Markov family of distributions is Kolmogorov-consistent
if and only if,
for each $n \ge 1$ and each ordered partition $B$ of $[n]$,
\[
(1 - q(n,1)) q(n-\Bsharp_1, \Bsharp_1) =
	q(n-\Bsharp_1, \Bsharp_1+1) + q(n-\Bsharp_1+1, \Bsharp) .
\]
which is equivalent to equation~(\ref{consistencycondition_eq}).
\end{proof}

If the splitting rule corresponds to a Kolmogorov-consistent Markov survival process, it is
said to be a {\it consistent splitting rule}.  A nice property of such rules is that they admit
simple integral representations.

\begin{proposition} \label{int_rep_prop}
Every consistent splitting rule admits an integral representation
\begin{equation} \label{integral_representation}
q(n-d, d) = q_n(d)= \frac{1}{Z_n} \left( \int_0^1 x^{n-d}(1-x)^{d} \varpi (dx) + c \cdot \mathbf{1}_{\{d=1\}} \right) 
\end{equation}
where $Z_n=\int (1-x^n) \varpi (dx) + n \cdot c$.  Here the measure is defined on $[0,1)$ and satisfies 
\[
\int_{0}^1 (1-x) \, \varpi(dx)<\infty.
\]  
The constant, $c$, is called the erosion coefficient, and $\varpi$, is called the dislocation measure.  
\end{proposition}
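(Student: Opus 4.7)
The plan is to reduce the assertion to the Hausdorff moment problem by passing from the normalized splitting rule $q$ to unnormalized quantities that satisfy a Pascal-type recursion.  Set $Z_1 = 1$ and define $Z_n$ recursively by $Z_{n+1} = Z_n/(1 - q(n,1))$, then put $\tilde q(s,d) \defeq Z_{s+d}\, q(s,d)$.  Substituting $s = n-d$ into the consistency relation~(\ref{consistencycondition_eq}) and using the identity $1 - q(n,1) = Z_n/Z_{n+1}$ gives, for every $s \ge 0$ and $d \ge 1$,
\[
\tilde q(s,d) \;=\; \tilde q(s,d+1) + \tilde q(s+1, d),
\]
while the choice of $Z_n$ is exactly what is needed to make $\tilde q(n,1) = Z_{n+1} - Z_n$.

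Writing $f(s) \defeq \tilde q(s,1)$ and iterating the Pascal relation yields
\[
\tilde q(s,d) \;=\; (-\Delta)^{d-1} f(s) \;=\; \sum_{j=0}^{d-1} \binom{d-1}{j} (-1)^j f(s+j), \qquad d \ge 1,
\]
where $\Delta f(s) = f(s+1) - f(s)$.  The positivity $\tilde q(s,d) \ge 0$ then says exactly that $(-1)^k \Delta^k f(s) \ge 0$ for all $s, k \ge 0$, i.e.~that the sequence $f(0), f(1), \ldots$ is \emph{completely monotone}.  By the Hausdorff moment theorem there is a unique finite Borel measure $\mu$ on $[0,1]$ with $f(s) = \int_0^1 x^s\,\mu(dx)$.

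To extract the erosion coefficient, separate the atom at one by writing $\mu = c\,\delta_1 + \nu$ with $\nu(\{1\})=0$, and define $\varpi(dx) \defeq \nu(dx)/(1-x)$ on $[0,1)$.  The integrability $\int(1-x)\,\varpi(dx) = \nu([0,1)) \le \mu([0,1]) < \infty$ then holds automatically.  Using the elementary identity $(-\Delta)\, x^s = x^s(1-x)$, together with $(-\Delta)^{d-1} c = 0$ for $d \ge 2$, gives
\[
\tilde q(s,d) \;=\; \int_0^1 x^s(1-x)^d\,\varpi(dx) \;+\; c\,\mathbf{1}_{\{d=1\}}.
\]
Summing against the binomial weights and invoking the binomial theorem then recovers $Z_n = \int(1-x^n)\,\varpi(dx) + nc$, and dividing by $Z_n$ produces~(\ref{integral_representation}).

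The converse (sufficiency) direction is the easier half: the formula for $\tilde q(s,d)$ plainly satisfies the Pascal identity, since $x + (1-x) = 1$ eliminates the cross-term in the integral and the erosion term appears only when $d=1$; the consistency relation~(\ref{consistencycondition_eq}) follows once $Z_{n+1} - Z_n = \tilde q(n,1)$ is checked, which is a one-line computation.  The principal technical step is the complete-monotonicity argument: everything depends on recasting~(\ref{consistencycondition_eq}) in the homogeneous Pascal form so that Hausdorff's theorem becomes applicable.  A minor but necessary care-point is isolating the atom at $x=1$ so that $\varpi$ lives on the half-open interval $[0,1)$ with the stated integrability $\int(1-x)\,\varpi(dx) < \infty$.
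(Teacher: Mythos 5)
Your proof is correct, and it is worth noting that the paper itself never proves Proposition~\ref{int_rep_prop}: it is stated with only a pointer to the analogous representations in McCullagh, Pitman and Winkel (2008) and Ford (2006). Your argument is the standard one for such results and meshes exactly with the machinery the paper develops elsewhere. Your $Z_n$ is the (standardized) characteristic index $\zeta_n$ of equation~(\ref{zeta}), your $\tilde q(s,d)$ is the rate $\lambda(s,d)=(-1)^{d-1}(\Delta^d\zeta)(s)$ of the density section, and the identity $\tilde q(s,d)=(-\Delta)^{d-1}f(s)$ with $f=\Delta\zeta$ is precisely Proposition~\ref{char_index_prop} in disguise. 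The genuinely new content you supply is the necessity direction: recasting (\ref{consistencycondition_eq}) as the homogeneous Pascal recursion, observing that nonnegativity of all $\tilde q(s,d)$ is complete monotonicity of the first-difference sequence $f$, and invoking Hausdorff's moment theorem (with $f(0)=Z_1 q(0,1)=1$ guaranteeing finiteness of $\mu$). Splitting off the atom of $\mu$ at $x=1$ to produce the erosion coefficient $c$, and dividing by $1-x$ to get $\varpi$ on $[0,1)$ with $\int(1-x)\,\varpi(dx)<\infty$, is exactly the right care-point; the sufficiency direction you sketch is the computation the paper carries out explicitly in its examples via $\zeta_n=\int_0^1(1-\alpha^n)\,dw(\alpha)$. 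The only cosmetic remark is that you could state explicitly that $q(n,1)\le 1/(n+1)<1$ for $n\ge 1$ (forced by the normalization (\ref{sumconstraint})), so that the recursion $Z_{n+1}=Z_n/(1-q(n,1))$ never degenerates; this is implicit but worth a line.
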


Both the condition and integral representation are similar to (4) in 
McCullagh, Pitman and Winkel (2008)
and to proposition~41 of Ford (2006).
However, the splitting probabilities in these papers are
symmetric and subject to a normalization condition different from (\ref{sumconstraint}),
so the probabilities are different.  

\subsection{The characteristic index}

Here the {\it characteristic index} associated with each consistent exchangeable partial ranking
is defined and a relation to the splitting rule via $k$th order differences is shown.

\begin{definition} \normalfont
A \emph{characteristic index}, $\zeta$, is a sequence
$\zeta_0,\zeta_1,\zeta_2,\ldots$ beginning with $\zeta_0=0$, $\zeta_1 > 0$, and subsequently
\begin{equation}\label{zeta}
\zeta_{n+1} = \frac{\zeta_n} {1- q_{n,1}} = \zeta_1 \prod_{j=1}^{n} (1 - q_{j,1})^{-1}
\end{equation}
for $n\ge 1$.
\end{definition}
The sequence is said to be in \emph{standardized form} if $\zeta_1 = 1$,
and each standardized sequence determines a Markov partial ranking
provided that the splitting probabilities in (\ref{consistencycondition_eq}) are non-negative.

A natural question is whether the splitting rules can be reconstructed 
from the characteristic index.
Proposition~\ref{char_index_prop} provides an affirmative answer
via $k$th order forward differences, $\Delta^k \zeta$, defined as
\[
(\Delta^k \zeta)_n = \sum_{j=0}^k (-1)^{k-j} \binomial k j \zeta_{n+j},
\]
for $n\ge 0$,
so that $\Delta\zeta_n = \zeta_{n+1} - \zeta_n$ is the first difference,
$(\Delta^2\zeta)_n = \zeta_{n+2} - 2\zeta_{n+1} + \zeta_n$ is the second, and so on.

\begin{proposition} \label{char_index_prop}
Given a characteristic index, $\zeta$, the corresponding splitting rule is given by
\begin{equation}\label{zetacondition}
q(r, d) = \frac{(-1)^{d-1} (\Delta^d \zeta)_r } { \zeta_{r+d}}
\end{equation}
for $r \ge 0$ and $d \ge 1$.
The splitting probabilities are therefore determined by the standardized
sequence $\zeta/\zeta_1$.
Moreover, the characteristic index yields a consistent splitting
rule if it satisfies a non-negativity condition on forward differences; that is,
$(-1)^{d-1} (\Delta^d \zeta)_r \geq 0$.
\end{proposition}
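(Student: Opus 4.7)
The plan is to induct on $d$, using the consistency relation from Proposition~\ref{consistencycondition_prop} and the telescoping identity $(\Delta^{d+1}\zeta)_r = (\Delta^d \zeta)_{r+1} - (\Delta^d \zeta)_r$ for forward differences.

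First I would handle the base case $d=1$. The defining recursion $\zeta_{n+1}(1-q(n,1)) = \zeta_n$ immediately rearranges to $q(n,1) = (\zeta_{n+1}-\zeta_n)/\zeta_{n+1} = (\Delta\zeta)_n/\zeta_{n+1}$, which is exactly (\ref{zetacondition}) with $d=1$.

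For the inductive step, rewrite the consistency equation (\ref{consistencycondition_eq}) in the form
\[
q(r, d+1) = \bigl(1 - q(r+d, 1)\bigr)\, q(r,d) - q(r+1, d),
\]
and substitute the identity $1 - q(r+d,1) = \zeta_{r+d}/\zeta_{r+d+1}$ from the definition of $\zeta$. Plugging in the inductive hypothesis $q(r,d) = (-1)^{d-1}(\Delta^d\zeta)_r/\zeta_{r+d}$ and $q(r+1,d) = (-1)^{d-1}(\Delta^d\zeta)_{r+1}/\zeta_{r+d+1}$ produces
\[
q(r,d+1) = \frac{(-1)^{d-1}}{\zeta_{r+d+1}} \bigl[(\Delta^d\zeta)_r - (\Delta^d\zeta)_{r+1}\bigr] = \frac{(-1)^d (\Delta^{d+1}\zeta)_r}{\zeta_{r+d+1}},
\]
matching (\ref{zetacondition}) with $d$ replaced by $d+1$.

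Having established the formula, non-negativity $q(r,d) \ge 0$ is manifestly equivalent to the sign condition $(-1)^{d-1}(\Delta^d\zeta)_r \ge 0$ (assuming $\zeta_n > 0$, which follows inductively from $\zeta_1 > 0$ and $q(n,1) < 1$). Finally, to confirm that (\ref{zetacondition}) actually defines a valid splitting rule, one must verify the normalization $\sum_{d=1}^n \binom{n}{d} q(n-d,d) = 1$. I would do this by induction on $n$: the case $n=1$ gives $q(0,1) = (\Delta\zeta)_0/\zeta_1 = 1$, and for the inductive step one substitutes the consistency relation to express each term at level $n+1$ in terms of the splitting probabilities at level $n$, collecting terms via the Pascal identity $\binom{n+1}{d} = \binom{n}{d} + \binom{n}{d-1}$ until the expression reduces to $\sum_d \binom{n}{d} q(n-d,d)$ times $(1 + q(n,1) + \cdots)$ that telescopes to~$1$. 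The main obstacle is purely bookkeeping in this last combinatorial identity; the derivation of (\ref{zetacondition}) itself is a short difference-equation argument that I expect to be immediate from the consistency relation.
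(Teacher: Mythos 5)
Your proof is correct and rests on exactly the same algebra as the paper's: the substitution $1-q(n,1)=\zeta_n/\zeta_{n+1}$ combined with the telescoping identity $(\Delta^{d+1}\zeta)_r=(\Delta^d\zeta)_{r+1}-(\Delta^d\zeta)_r$. The only difference is direction: you run the computation as an induction on $d$ that \emph{derives} formula (\ref{zetacondition}) from the consistency relation, whereas the paper posits the formula and \emph{verifies} that it satisfies (\ref{consistencycondition_eq}); since the consistency relation determines $q(r,d+1)$ uniquely from the level-$d$ values and the singleton splits, the two arguments are logically interchangeable, and your version makes the uniqueness claim ("the splitting probabilities are determined by $\zeta/\zeta_1$") slightly more explicit. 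One point in your favour: you flag that the normalization $\sum_{d=1}^n\binom{n}{d}q(n-d,d)=1$ must also be checked before one can call the result a splitting rule, a step the paper's proof omits entirely. Your treatment of it is only a sketch, but the identity does hold: expanding $(-1)^{d-1}(\Delta^d\zeta)_{n-d}$ in terms of $\zeta_{n-i}$ and using $\sum_{d\ge i}(-1)^d\binom{n}{d}\binom{d}{i}=(-1)^n\mathbf{1}_{\{i=n\}}$ for $i\ge1$ (and $=-1$ for $i=0$, $n \ge 1$) collapses the sum to $(\zeta_n-\zeta_0)/\zeta_n=1$, which is where the convention $\zeta_0=0$ is actually used.
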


The proof of Proposition~\ref{char_index_prop} can be found in Appendix~\ref{app:char_index_proof}.
The main advantage of working with the characteristic index is that
the Kolmogorov consistency condition (\ref{consistencycondition_eq})
may be written in a more convenient form as 
a non-negativity condition on forward differences.

\subsubsection{Examples}

Since each Markov partial ranking is associated with a
characteristic sequence (modulo scalar multiplication),
the space of Markov partial rankings may be identified with the
space of non-negative sequences satisfying (\ref{zetacondition}).
Evidently this space is a convex cone, closed under positive linear combinations.
It contains the trivial sequence $\zeta_n =n$, for which $\Delta\zeta = (1,1,\ldots)$
and $\Delta^d \zeta \equiv 0$ for $d\ge 2$.
This means $q(r,1) = 1/(r+1)$, so all splits are singletons,
and there are no ties in the partial ranking.

For $\rho \ge 0$, the singleton split $q(n-1,1) = 1/(n+\rho)$ for $n \ge 2$
implies $\zeta_n \propto n+\rho$, linear in~$n$ for $n \ge 1$, with $\zeta_0=0$.
The forward difference sequences imply
that $\Delta\zeta_{n-1} / \zeta_{n} = 1/(n+\rho)$ for $n\ge 2$,
$q(0,n) = (-1)^{n-1} (\Delta^n \zeta)_0 = \rho/(n+\rho)$ and $q(s,d) = 0$ otherwise.
At each event time, one individual out of $n$ fails with probability $1/(n+\rho)$ (for each individual);
otherwise, with probability $\rho/(n+\rho)$, the entire population fails.

It also contains the sequences $\zeta_n = n^\alpha$ for $0 < \alpha < 1$,
for which $q(r,1) = 1 - (1 + 1/r)^{-\alpha}$.  
Similarly, $q(n-1, 1) = 1/n^2$ gives rise to $\zeta_n \propto n/(n+1)$,
and the uniform splitting rule $q(n-d, d) = (n-d)!\, d! / n\, n!$.

For each $0 < \alpha < 1$, the non-decreasing geometric sequence 
$\xi_n = 1 - \alpha^n$ is such that
\[
(-1)^{d-1} (\Delta^d \xi)_r = (-1)^d \alpha^r (\alpha-1)^d = \alpha^r (1-\alpha)^d > 0,
\]
implying that there exists a Markov partial ranking with
binomial splitting probabilities
$q(r,d) = \alpha^r(1-\alpha)^d / \xi_{r+d}$ for $d \ge 1$.
Moreover, for any non-negative measure $w$ on $[0,1)$, the integrated sequence
$\zeta_n = \int_0^1 (1-\alpha^n) \, dw(\alpha)$ is monotone, and is such that
\[
(-1)^{d-1} (\Delta^d\zeta)_r = \int_0^1 \alpha^r (1-\alpha)^d\, dw(\alpha) \ge 0.
\]
Thus, the Kolmogorov condition is automatically satisfied for all measures $w$ such that
$\zeta_1 = \int_0^1 (1-\alpha)\, dw(\alpha)$ is positive and finite.  By equation~\eqref{integral_representation}, this is the complete set of splitting rules with zero erosion measure ($c = 0$).
Finally, for each $\beta > 0$, the sequence
$\zeta_{\beta n} = \int_0^1 (1 - \alpha^{\beta n}) \, dw(\alpha)$
also satisfies the Kolmogorov condition,
so each $\zeta$ having an integral representation may
be extended to a function on the positive real line.

\begin{example}[Beta-splitting rules]
A natural family of measures are those conjugate to the binomial splitting probabilities, 
$dw(\alpha)  = \alpha^{\rho -1} (1-\alpha)^{\beta -1} d\alpha$.  Here, $\rho >0$ while 
$\beta>-1$ as $d \in \{ 1, \ldots, n \}$. The characteristic index then
depends on the second parameter, $\beta$.  
\begin{enumerate}
\item For $\beta >0$, the characteristic index is
\begin{eqnarray}
\label{hp}
\zeta_n &=&  B(\rho, \beta) - B(\rho+n, \beta) = B(\rho, \beta) \left( 1 - \frac{\rho^{\uparrow n}}{(\rho+\beta)^{\uparrow n}} \right)
\end{eqnarray}
where $\rho^{\uparrow k} = \rho \cdot (\rho + 1) \cdot \cdot \cdot (\rho + k -1)$ 
is the ascending factorial and $B(\rho, \beta)$ is the beta function.  
The previous mentioned example, $q(n-1,1) = 1/n^2$, corresponds to
$\rho = \beta = 1$.

\item For $\beta = 0$, the characteristic index is 
\begin{eqnarray}
\nonumber
\zeta_n &=& \int_0^1 \alpha^{\rho-1}(1 + \alpha + \cdots + \alpha^{n-1})\, d\alpha \\
\label{hp}
	&=& \sum_{j=0}^{n-1} \frac1{\rho + j} = \psi(n+\rho) - \psi(\rho),
\end{eqnarray}
where $\psi$ is the derivative of the log gamma function. 
This process is called the \emph{harmonic process}.

\item Finally, for $\beta \in (-1,0)$, the process has characteristic index 
\begin{align*}
\zeta_n &= \sum_{j=0}^{n-1} \frac{\Gamma(j+\rho) \Gamma (\beta + 1) }{\Gamma ( j + 1 + \beta + \rho)} \\
&= O \left( n^{-\beta} \right)
\end{align*}
\end{enumerate}
\end{example}

A very similar process to the harmonic process with density measure
$dw(\alpha) = \alpha^{\rho-1} d\alpha / (-\log(\alpha))$ 
has characteristic index
\begin{eqnarray}\nonumber
\zeta_n &=& \int_0^1 (1-\alpha^n) \alpha^{\rho-1} \, d\alpha / (-\log\alpha) = \int_0^\infty (1 - e^{-n z}) z^{-1} e^{-\rho z}\, dz \\
\label{gp}
	&=& \log(1+n/\rho).
\end{eqnarray}
This is the characteristic index of the gamma process, which is explored in
more detail in section~3.

\beginignoretext
It is apparent from these examples and the preceding discussion that if $\Psi(t)$
is the characteristic exponent  of a stationary random measure,
i.e.,~$-\Psi(-t)$ is the cumulant generating function
of an infinitely divisible positive random variable,
then $\zeta_n = \Psi(n)$ evaluated at the non-negative integers
is the characteristic index of an exchangeable Markov partial ranking.
Moreover, the associated L\'evy measure $e^{-z} w(e^{-z})\, dz$ for $z > 0$
tells us much about the evolution of the process, the distribution of the number of
blocks and the block sizes, and so on.
The relevance and importance of this construction for application to survival studies
 are described in section~3.
\endignoretext

\subsection{Holding times}
\label{hold_times}
A partial ranking of $[n]$ is not only an ordered partition $(B_1,\ldots,B_k)$,
but also a strictly decreasing sequence of subsets
\[
[n]\equiv R_0 \supset R_1\supset\cdots\supset R_k \equiv\emptyset,
\]
called risk sets,
in which the increments $B_r = R_{r-1} \setminus R_r$ are non-empty.
By associating with each risk set an independent
exponentially distributed holding time,
a Markov process in continuous time is constructed whose
trajectories are non-increasing subsets of $[n]$, with the empty
set as the absorbing state.
Moreover, by choosing the rate function in an appropriate way,
the survival process can be made both consistent under subsampling,
 and exchangeable under permutation of particles.
Consistency under subset selection means that there exists an infinite process,
i.e.,~a survival process for the infinite population,
which implies in turn that the ratio $p_{n+1}/p_n$ determines
the conditional distribution of $T_{n+1}$ given the survival times $T[n]$ for
the initial sample.
It is also possible to compute the conditional distribution
given $T[n]$ and $T_{n+1} > 5$,
i.e.,~to prognosticate in a mathematically consistent manner.

An argument essentially the same as that used in section~4 of 
McCullagh, Pitman and Winkel (2008) leads to the following
consistency condition on the rate function

\begin{proposition}
A sequence of rate functions, $\{ \tau_n \}_{n=1}^\infty$, is consistent
if
\[
\tau_{n+1}(1 - q(n, 1)) = \tau_n.
\]
In other words, the characteristic index $\tau_n \equiv \zeta_n$ is also the exponential failure rate
needed to ensure consistency of the continuous-time Markov process.  
\end{proposition}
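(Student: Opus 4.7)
The plan is to identify the continuous-time Markov process on subsets of $[n+1]$ that has holding time rate $\tau_{n+1}$ at state $[n+1]$ and splitting rule $q$, project out particle $n+1$, and match the resulting law with the directly-built process on $[n]$ that has rate sequence $\{\tau_m\}_{m\le n}$. Consistency of the discrete block structure under this projection is already handed to us by Proposition~\ref{consistencycondition_prop}, so the entire issue reduces to matching the law of the holding times.

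First I would focus on the very first event. In the process on $[n+1]$ the initial holding time $X_1$ is exponential with rate $\tau_{n+1}$, after which a failure block $F_1\subseteq [n+1]$ is drawn from the splitting distribution. There are two mutually exclusive scenarios: (i) with probability $1-q(n,1)$ the block $F_1$ contains some particle of $[n]$, so the first event is visible in the projection and the projected holding time is just $X_1$; (ii) with probability $q(n,1)$ we have $F_1=\{n+1\}$, so the first event is invisible in the projection, and by the strong Markov property an additional $X_2$, exponential with rate $\tau_n$ and independent of $X_1$, elapses at state $[n]$ before the next, necessarily visible, event. The projected first holding time is thus a two-point mixture: with weight $1-q(n,1)$ it is exponential with rate $\tau_{n+1}$, and with weight $q(n,1)$ it is the convolution of exponentials with rates $\tau_{n+1}$ and $\tau_n$.

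The core verification is then a short Laplace-transform computation. Writing $\lambda=\tau_{n+1}$, $\mu=\tau_n$, $p=q(n,1)$, one checks that
\[
(1-p)\frac{\lambda}{\lambda+s} \;+\; p\cdot\frac{\lambda}{\lambda+s}\cdot\frac{\mu}{\mu+s} \;=\; \frac{\mu}{\mu+s}
\]
holds identically in $s$ precisely when $\lambda(1-p)=\mu$, which is the stated condition $\tau_{n+1}(1-q(n,1))=\tau_n$. This simultaneously delivers sufficiency and necessity at the first step.

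To close the argument I would induct on the size of the risk set. After the first visible event the projected state is some $R'\subset[n]$ with $\#R'=m<n$, and by memorylessness together with the strong Markov property the continuation of the projected process from $R'$ is, in law, a Markov survival process on $R'$ with the same rate sequence and splitting rule; applying the first-step analysis at each subsequent state yields full consistency. Comparing the resulting recurrence $\tau_{n+1}/\tau_n = 1/(1-q(n,1))$ with equation~(\ref{zeta}) immediately yields $\tau_n \propto \zeta_n$. The main obstacle is the bookkeeping at scenario~(ii): one must verify that after the silent removal of $\{n+1\}$ the full process genuinely restarts at state $[n]$ as a Markov survival process with rate $\tau_n$ and splitting rule $q$ restricted appropriately, and it is here that the discrete $[n+1]\to[n]$ consistency of $q$ from Proposition~\ref{consistencycondition_prop} does the real work.
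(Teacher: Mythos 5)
Your argument is correct, and it supplies in full the proof that the paper itself only gestures at (the text defers to ``an argument essentially the same as that used in section~4 of McCullagh, Pitman and Winkel (2008)'', which is precisely this delete-a-particle, first-event projection computation). The two-point mixture of the projected first holding time and the Laplace-transform identity are right, and the identity does hold iff $\tau_{n+1}(1-q(n,1))=\tau_n$, so you even get necessity, which the proposition does not claim. One point worth making explicit rather than leaving implicit in your appeal to Proposition~\ref{consistencycondition_prop}: what must be matched is the \emph{joint} law of the projected first holding time and the first visible block $b\subseteq[n]$, not just the marginal of the holding time. This works because, for each fixed $b$ with $\#b=d$, the consistency condition $(1-q(n,1))q(n-d,d)=q(n+1-d,d)+q(n-d,d+1)$ says the visible-versus-invisible weights conditional on the first visible block being $b$ are $(1-q(n,1)):q(n,1)$ \emph{independently of $b$}; hence the conditional law of the holding time given the block is the same mixture for every $b$, the time is independent of the block, and your marginal Laplace computation suffices. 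With that observation and the downward induction on the risk-set size (noting that after a visible event the continuation is either a genuine $m$-particle process, if $n+1$ has died, or a projection of an $(m+1)$-particle one, covered by the induction hypothesis), the proof is complete, and the recurrence $\tau_{n+1}=\tau_n/(1-q(n,1))$ matches~(\ref{zeta}), giving $\tau_n\propto\zeta_n$.
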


While the assumption of temporal homogeneity seems natural for the Markov model, the 
argument can be extended to non-constant holding rates.  That is, we may specify 
a marginal hazard function, $h_2: \Real^+ \to \Real$.  Consistency implies 
the above relation holds with $h_{n} (t)$ replacing $\tau_n$.  Alternatively, a monotone 
transformation of time can be applied in order to modify the marginal distribution.

\subsection{Density function}
Since the evolution of the process $R(t)$ is Markovian, it is a
straightforward exercise to give an expression for the probability
density function at any specific temporal trajectory.
The observation space consists of a partial ranking $B$ of $[n]$
comprising $k=\#B$ disjoint subsets, and for each subset a failure time.
The probability that the first failure occurs in the interval $dt_1$ and
that $B_1$ is the set of failures is
\[
 \zeta(n) e^{- \zeta(n) t_1} \, dt_1 \times q(\Rsharp_1, \Bsharp_1) =
 e^{- \zeta(n) t_1} \, dt_1 \times \lambda(\Rsharp_1, \Bsharp_1),
\]
where $\zeta(n) \equiv \zeta_n$ and  $\lambda(r,d) = q(r,d) \zeta(r+d) = (-1)^{d-1} (\Delta^d \zeta)(r)$.
Continuing in this way, it can be seen that the joint density at
any temporal trajectory $R(\cdot)$ consisting of $k$ blocks with
failure times $0 < t_1 < \cdots < t_k$ is
\begin{equation}\label{density}
f_n(B, t) = \exp\Bigl(- \int_0^\infty \zeta(\Rsharp(s))\, ds \Bigr)
\prod_{j=1}^k \lambda\bigl(\Rsharp(t_j), \Bsharp(t_j)\bigr).
\end{equation}
Here $k = \#B$ is the number of blocks, or more generally the number of
distinct failure times, and
$B_j \equiv B(t_j)$ is the block or subset of particles failing at time~$t_j$.
The density is non-negative, and the integral is one.
In the absence of censoring, this means
\[
\sum_{B\in\OP_n} \int_{0<t_1<\cdots<t_{\#B}} f_n(B, t)\, dt = 1,
\]
so the number of blocks $1\le \#B\le n$ is a random variable whose
distribution is determined by (\ref{density}), and hence by~$\zeta$.
Supplementary figure~\ref{power_simulation} shows simulated survival processes
with characteristic index $\zeta_n = n^\rho$ for various values of $\rho$ together 
with the associated conditional survival function.
Also, supplementary figure~\ref{harmonic_simulation} shows 
several simulated harmonic survival processes for various choices of $\rho >0$
along with the conditional survival function.

The random sequence of failure times $T_1, T_2,\ldots$ whose finite-dimensional joint distributions
are given by~(\ref{density}) is infinitely exchangeable.
The $n$-dimensional joint distribution is continuous in the sense that it has no fixed atoms.
For $n\ge 2$, it is not continuous with respect to Lebesgue measure in $\Real^n$ because
the distribution has condensations on all diagonals implying that
$\pr(T_1 = T_2) = q(0,2) > 0$,
and likewise for arbitrary subsets.
The one-dimensional marginal distributions are exponential with rate $\zeta(1)$.
However, a monotone continuous temporal transformation that sends Lebesgue measure
to the measure $\nu(\cdot)$ on $(0,\infty)$, also transforms (\ref{density})
to an exchangeable semi-Markov process with density
\begin{equation}\label{semi-markovdensity}
f_n(B, t) = \exp\Bigl(- \int_0^\infty \zeta(\Rsharp(s))\, d\nu(s) \Bigr)
\prod_{j=1}^k \lambda\bigl(\Rsharp(t_j), \Bsharp(t_j)\bigr)\, \nu(dt_j).
\end{equation}
If $g(T) = \nu ( ( 0, T))$ is the associated monotone continuous function then 
$g^{-1} (T_i)$ is exponential with rate $\zeta(1)$.

Although the argument leading to (\ref{density}) did not explicitly
consider censoring, the density function has been expressed in integral
form so that censoring is accommodated correctly.
The pattern of censoring affects the evolution of $\Rsharp$,
and thus affects the integral,
but the product involves only failures and failure times.

\subsection{Sequential description}
Since $f_n$ is the marginal distribution of $f_{n+1}$, the conditional distribution
of $T_{n+1}$ given the temporal evolution of the risk set 
$R[n] \equiv (B, t)$ for the first $n$ particles is given by the ratio $f_{n+1}(B',t')/f_n(B, t)$,
where $(B',t')$ is any event compatible with the observation $(B,t)$.
If the new particle fails at one of the previous failure times,
then $t'=t$, and $B'$ is obtained by inserting
the new particle into one of the blocks of $B$;
otherwise the new particle fails interstitially as a singleton in one of the
intervals, $(0, t_1), \ldots, (t_{\#B}, \infty)$,
in which case $\#B' = \#B + 1$.

The conditional distribution is best described
in terms of the conditional hazard measure, $\Lambda$,
which is such that 
\[
\pr(T_{n+1} > t \given R[n]) = e^{-\Lambda((0,t])}.
\]
The conditional hazard has a continuous component supplemented by
an atom at each previously observed failure time.
The continuous component has a density and a cumulative hazard
\begin{eqnarray*}
h(t) &=&  \zeta(\Rsharp(t)+1) -  \zeta(\Rsharp(t))
	= (\Delta\zeta)(\Rsharp(t)), \\
H(t) &=& \int_0^t (\Delta\zeta)(\Rsharp(s))\, ds.
\end{eqnarray*}
Note that $\Rsharp$ is piecewise constant, so the integral is trivial
to compute, but censoring implies that it is not necessarily constant
between successive failures.
For all consistent Markov survival processes, $\zeta_1 > 0$ implies that
the continuous component has infinite total mass,
so $\pr(T_{n+1} < \infty \given R[n]) = 1$,
i.e.,~$T_{n+1}$ is finite with probability one.

Let $t$ be a failure time with
$\Rsharp(t^-) = r+d$ and $\Rsharp(t) = r$ with $d > 0$.
At each such point the conditional hazard has an atom with finite mass
\[
\Lambda(\{t\}) = \log\frac{\zeta(r+d)\, q(r, d)}{\zeta(r+d+1)\, q(r+1,d)},
\]
or, on the probability scale,
\[
\exp(-\Lambda(\{t\})) = \frac{\zeta(r+d+1)\, q(r+1, d)}{\zeta(r+d)\, q(r,d)}
= \frac{(\Delta^d \zeta)(r+1)}{(\Delta^d\zeta)(r)}.
\]
The total mass of the atoms is finite.

Given the trajectory of the risk set for the first~$n$ particles,
the conditional survival function is 
\begin{eqnarray}\label{conditionaldistribution}
\pr(T_{n+1} > t \given R[n]) 
		&=& \exp(-H(t)) \prod_{j : t_j \le t} \frac{(\Delta^{d_j} \zeta)(r_j+1)} {(\Delta^{d_j}\zeta)(r_j)}.
\end{eqnarray}
Although this may look a little complicated, it is not difficult to generate the survival times
sequentially for processes whose characteristic index admits a simple expression
for finite differences.
Right censoring is automatically accommodated by the integral in the continuous component,
so the observed trajectory $R[n]$ may be incomplete.

The harmonic process (\ref{hp}) with characteristic index
$\zeta_n = \nu (\psi(n+\rho)-\psi(\rho))$ for some $\nu > 0$ is such that
$(-1)^{d-1}(\Delta^d\zeta)_r = \nu\Gamma(d)/\ascf{(r+\rho)}d$.
Accordingly, the continuous component of the conditional hazard is
$h(t) = \nu/(\Rsharp(t) + \rho)$, implying that
\[
H(t) = \sum_{i: t_i \le t} \nu \frac{t_{i} - t_{i-1}}{\Rsharp(t_{i-1}) + \rho} 
+ \nu\frac{t - t_{j}}{\Rsharp(t_j)+ \rho},
\]
where the sum runs over event times, censored or failure, such that $t_i \le t$,
and $t_j$ is the last such event.
The discrete component~(\ref{conditionaldistribution}) is a product over failure times
\begin{equation}\label{kaplan-meier}
\prod_{j : t_j \le t}  \frac{(\Delta^{d_j} \zeta)(r_j+1)}{(\Delta^{d_j} \zeta)(r_j)} =
\prod_{j : t_j \le t} \frac{r_j+\rho}{r_j+d_j+\rho}.
\end{equation}
In most cases of practical interest, the continuous component is negligible
over the greater part of the range of interest;
for small $\rho$, the discrete component is essentially the same as
the right-continuous version of the Kaplan-Meier product limit estimator.  Note that exchangeability implies the joint conditional survival probability, $\pr ( T_{n+1} > t, T_{n+2} > t^\prime \given R[n])$, is distinct from the Kaplan-Meier product estimate which assumes independence among individuals.

\subsection{Weak continuity of predictions}
The conditional distribution of $T_{n+1}$ given
the sequence $\bft=(t_1,\ldots, t_n)$ of previous failures
has an atom at each distinct failure time, and is continuous elsewhere.
The predictive distribution is weakly continuous if a small
perturbation of the failure times gives rise to a small
perturbation of the predictive distribution.
In other words, for each $n\ge 1$, and for each non-negative vector~$\bft$,
\[
\lim_{\epsilon\to 0} \pr(T_{n+1}\le x \given T[n]=\bft+\epsilon) =
\pr(T_{n+1} \le x \given T[n]=\bft)
\]
at each continuity point, i.e.,~$x> 0$ not equal to one of the failure times.

Weak continuity is an additivity condition on the 
hazard atom as a function of the risk set and the number of tied failures.
If $d$ failures are exactly tied, the hazard atom is
\[
-\log \frac{(\Delta^d\zeta)(r+1)}{(\Delta^d\zeta)(r)};
\]
the same failures occurring as singletons one millisecond apart
give rise to $d$ distinct hazard atoms whose sum is
\[
-\log \frac{(\Delta\zeta)(r+d)}{(\Delta\zeta)(r)} .
\]
The predictive distribution is weakly continuous if these are equal
for every integer $r\ge 0$ and $d\ge 1$.

\begin{theorem}[Continuity of predictions] \label{cty_pred_thm}
A Markov survival process has weakly continuous predictive distributions
if and only if it is a harmonic process:
$\zeta(n) \propto  \psi(n+\rho) - \psi(\rho)$ for some $\rho > 0$.
The iid exponential model is included as a limit point.
\end{theorem}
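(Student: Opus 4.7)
The plan is to start from the weak continuity condition distilled in the paragraph preceding the theorem, namely that for every $r\ge 0$ and $d\ge 1$,
\[
\frac{(\Delta^d\zeta)(r+1)}{(\Delta^d\zeta)(r)} \;=\; \frac{(\Delta\zeta)(r+d)}{(\Delta\zeta)(r)}.
\]
This is both necessary and sufficient, because the singleton atom telescope on the right-hand side is itself forced by the atom formula derived earlier. So the task reduces to showing that this functional equation on the differences of $\zeta$ has exactly the harmonic solutions (plus the linear one as a limit).

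The key step is to specialize to $d=2$. Writing $a_r \defeq (\Delta\zeta)(r) = \zeta_{r+1}-\zeta_r$, the $d=2$ identity becomes
\[
(a_{r+2}-a_{r+1})\, a_r \;=\; a_{r+2}\,(a_{r+1}-a_r),
\]
which after dividing by $a_r\, a_{r+1}\, a_{r+2}$ (all strictly positive by the non-negativity condition of Proposition~\ref{char_index_prop}, together with $\zeta_1>0$) collapses to $2/a_{r+1} = 1/a_r + 1/a_{r+2}$. Hence the reciprocals $1/a_r$ form an arithmetic progression, so $1/a_r = \alpha + \beta r$ for constants $\alpha>0$ and $\beta\ge 0$ (with $\beta\ge 0$ forced by the positivity of $a_r$ for all $r$). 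If $\beta=0$, then $\zeta_n\propto n$, which is the iid exponential model named in the statement. If $\beta>0$, set $\rho = \alpha/\beta$ and sum telescopically to obtain
\[
\zeta_n \;=\; \zeta_0 + \sum_{r=0}^{n-1} \frac{1}{\beta(\rho+r)} \;=\; \frac{1}{\beta}\bigl(\psi(n+\rho)-\psi(\rho)\bigr),
\]
which is the harmonic process up to the overall scale factor.

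For the converse, I would verify directly that the candidate $\zeta_n = \psi(n+\rho)-\psi(\rho)$ satisfies the weak continuity condition for every $d\ge 1$, not just $d=2$. Using the closed form $(-1)^{d-1}(\Delta^d\zeta)(r) = \Gamma(d)/\ascf{(r+\rho)}{d}$ already quoted in the harmonic-process paragraph, both sides of the displayed identity reduce to $(r+\rho)/(r+\rho+d)$, confirming sufficiency. The iid exponential $\zeta_n\propto n$ is then recovered as $\rho\to\infty$ after renormalization, since $\psi(n+\rho)-\psi(\rho)\sim n/\rho$.

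The only delicate point is justifying that one may cancel freely by $a_r$ — i.e., that $(\Delta\zeta)(r)>0$ for every $r$. This follows from Proposition~\ref{char_index_prop}: non-negativity of the first forward differences is equivalent to non-negativity of the singleton splitting probabilities $q(r,1)$, and strict positivity is forced by $\zeta_1>0$ combined with the recursion $\zeta_{r+1} = \zeta_r/(1-q_{r,1})$ of equation~\eqref{zeta}. Once this is in hand, the rest of the argument is elementary algebra and direct verification, so I expect no further obstacles.
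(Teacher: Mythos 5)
Your proof is correct and follows essentially the same route as the paper: the paper likewise normalizes $(\Delta\zeta)_0=1$, writes $(\Delta\zeta)_1=\rho/(\rho+1)$, and lets the continuity condition propagate $(\Delta\zeta)_r=\rho/(\rho+r)$, which is exactly your $d=2$ recursion $2/a_{r+1}=1/a_r+1/a_{r+2}$ in disguise. Your version is somewhat more complete than the paper's sketch, since you make the arithmetic progression of reciprocals explicit and verify sufficiency for all $d\ge 1$ via $(-1)^{d-1}(\Delta^d\zeta)(r)=\Gamma(d)/\ascf{(r+\rho)}{d}$, a step the paper leaves implicit.
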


To see that it is not satisfied by any other Markov survival process,
it is sufficient to consider a sequence in standard form beginning with
$(\Delta\zeta)_0=1$,
$(\Delta\zeta)_1=  \rho/(\rho+1) < 1$. 
Then the key continuity condition determines the subsequent sequence
$(\Delta\zeta)_r = \rho/(\rho+r)$ in conformity with the harmonic series.
The only exception is the iid exponential process, which arises in the limit
$\rho\to\infty$ in which tied failures occur with probability zero.
All other Markov survival processes, including the gamma process,
have predictive distributions that
are discontinuous as a function of the initial configuration.

\subsection{Self-similarity and lack-of-memory}
Every Markov survival process has the property that
the conditional joint distribution of the residual lifetimes
$T_1 - t,\ldots, T_n - t$ given that $\min(T_1,\ldots, T_n) > t$
is the same as the unconditional distribution of $T_1,\ldots, T_n$.
This property, called \emph{lack of memory}, follows from
the Markov property of the distribution~(\ref{density}).

In general, if $t > 0$ is fixed, only a subset of the
initial sample of $n$ particles will survive beyond that time.
Let $S = \{i \colon T_i > t\} \subset[n]$ be the survivors.
Every Markov survival process with consistent finite-dimensional distributions
$\{p_n\}$ also has the property that the
conditional joint distribution given~$S$ of the residual lifetimes
$\{T_i - t \colon i\in S\}$ is distributed as $p_{S^\sharp}^{}$.
This property, called \emph{self-similarity}, is a consequence of
Markov homogeneity, namely that
the transition intensities are consistent and constant in time.

Lack of memory is a property of the distribution $p_n$ alone,
and does not require consistency of $p_n$ with $p_{n+1}$.
By contrast, self-similarity is a property of the process.
Obviously, self-similarity implies lack of memory.

\subsection{Seeded series \& urn models}
Suppose that an initial sequence consisting of $m$ values $T_1,\ldots, T_m$
is given, and that these correspond to an initial risk-set trajectory $R_0$.
Subsequent values are generated using the Markov rule (\ref{conditionaldistribution}).
The sequence $T_{m+1},\ldots$ is called a seeded series because the joint distribution
depends strongly on the initial sequence.
It is natural to ask what the joint distribution of $T_{m+1},\ldots, T_{m+n}$ is---whether
it is stationary, whether it is exchangeable, and so on.

It is not difficult to see that the joint density of the $m+n$-configuration $R$ given $R_0$
is
\[
\exp\Bigl(-\int_0^\infty \bigl(\zeta(\Rsharp(s)) - \zeta(\Rsharp_0(s)\bigr) \, ds \Bigr)
	\prod_{\rm deaths} \frac{\lambda(\Rsharp(t_j), \Bsharp(t_j))} {\lambda(\Rsharp_0(t_j), \Bsharp_0(t_j))}
\]
where the zero-order difference is defined as $\lambda(r, 0) = \Delta^0\zeta(r) = 1$.
The product runs over all death times.

In other words, regardless of the initial configuration,
the subsequent series is infinitely exchangeable.
Although it is Markovian, the initial series introduces persistent temporal features,
so the failure rate is not temporally homogeneous.
The one-dimensional marginal distributions satisfy (\ref{conditionaldistribution})
(i.e. the distribution of $T_i$ given $R_0$).

Supplementary figure~\ref{harmonic_simulation} illustrates the impact of seeding on conditional
survival function estimates.  Specifically, the $50$ initial values are assumed
independent, uniform on $(1,2)$. An additional $400$ survival 
times are generated conditional on the seeded series given the harmonic process for a given 
choice of parameters $\nu$ and $\rho$.

\subsection{Number of blocks \& block sizes}
The distribution of the number of blocks in a random ordered partition
 depends critically on the splitting probabilities.
The mean number of blocks satisfies the recurrence relation
\[
\mu_n = 1 + \sum_{d=1}^n \binomial n d q(n-d, d) \mu_{n-d}
\]
and there is a similar recurrence relation
\[
M_n(t) = e^t \,\sum_{d=1}^n\binomial n d q(n-d, d) M_{n-d}(t)
\]
for the moment generating functions.
The interest here is in the behaviour for large~$n$.

At one extreme, $q(n-1, 1) = 1/n$ implies $q(r,d) = 0$ for $d > 1$,
and $\mu_n = n$, so every block is a singleton and the partition is trivial.
Similarly, $q(n-1, 1) = 1/(n+\rho)$ implies that the ratio $\mu_n/n$ has a strictly
positive limit $1/(1+\rho)$, implying that a positive fraction of the blocks are singletons.
%
%

As for block sizes, it is clear that the distribution of the size of block~$i$,
$\# B_i$, stochastically dominates the distribution for all subsequent blocks.
By a stick-splitting argument, the expected block size for any 
block can be derived by first examing $E[\# B_1 ]$.  
For $q(n-1,1) = 1/(n+\rho)$, the 
asymptotic expected size of the first block is $(1+\rho)$. 


The beta-splitting rules exhibit different behavior depending on the value of $\beta$.  
Lemma~\ref{block_size_num_thm} provides a description of the 
asymptotic behavior of the number of blocks and block size for this process.

\begin{lemma}
\label{block_size_num_thm}
Let $D_n (\rho,\beta)$ denote the number of blocks given $n$ individuals for the beta process with $\rho \in (0, 
\infty)$ and $\beta \in (-1,\infty)$.  Then
\begin{enumerate}
\item For $\beta > 0$, as $n \to \infty$,
\begin{equation*}
E [ D_n (\rho, \beta) ] \sim \log (n)
\end{equation*}
where $x_n \sim y_n$ if $\lim_{n \to \infty} x_n / y_n = C \in (0, \infty)$ for $x_n$ and $y_n$
two non-negative sequences.

The fraction of edges in the first block, $n^{-1} \# B_{1,n}$, is asymptotically
distributed $Beta(\beta, \rho)$. 
Therefore, the relative frequencies within each block are given by
\[
(P_1, P_2, \ldots) = (W_1, \bar{W}_1 W_2, \bar{W}_1 \bar{W}_2 W_3, \ldots) 
\]
where $W_i$ are independent beta variables with parameters $(\beta, \rho)$, and
$\bar{W}_i = 1-W_i$.

\item For $\beta = 0$, as $n \to \infty$,
\begin{equation*}
E [ D_n (\rho, \beta) ] \sim \log^2 (n)
\end{equation*}
The above relation holds asymptotically for the gamma process as well.  Moreover, the expected number of particles in the first block, $E [ \# B_{1,n} ]$, is $n/( \rho \, \log n )$.
Asymptotically,
\[
\frac{ \log \# B_{1,n} }{\log n } \overset{D}{\to} U
\]
where $U$ has the uniform distribution on $(0,1)$.

\item For $\beta \in (-1,0)$,  as $n \to \infty$,
\begin{equation*}
E [ D_n (\rho, \beta) ] \sim n^{-\beta}
\end{equation*}
The fraction of edges 
in the first block is asymptotically $n^\beta$.  Asymptotically, for all $\rho > 0$,
\[
\pr ( \# B_{1,n} = d ) \to g_{\beta} (d)
\]
where $g_{\beta} (d)$ is defined by
\[
g_{\beta} (d) = \frac{-\beta \cdot \Gamma (d+\beta)}{\Gamma(d+1) \cdot \Gamma( 1+ \beta) } 
=\frac{-\beta}{\Gamma(1+\beta)} d^{\beta-1}
\]
for large $d$. So the number of particles in the first block has a power law distribution of degree $1-\beta$.
\end{enumerate}

\end{lemma}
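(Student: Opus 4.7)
The entire lemma is driven by the integral representation, which gives $q(n-d,d) = B(n-d+\rho, d+\beta)/\zeta_n$ and the following two-step simulation of $\#B_{1,n}$: draw $\alpha$ from the normalized posterior $(1-\alpha^n)\alpha^{\rho-1}(1-\alpha)^{\beta-1}/\zeta_n$ on $(0,1)$, then sample $\#B_{1,n}$ from Binomial$(n,1-\alpha)$ (the $d=0$ contribution vanishes automatically).  Combined with Markov self-similarity---conditional on $B_1$, the ordered partition of $[n]\setminus B_1$ is an independent copy of the process on $n-\#B_{1,n}$ particles---this reduces every claim to an asymptotic analysis of the posterior on $\alpha$ in each of the three regimes.

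Using $\sum_{d=1}^n d\binom nd \alpha^{n-d}(1-\alpha)^d = n(1-\alpha)$ one gets the closed form $E[\#B_{1,n}] = nB(\rho,\beta+1)/\zeta_n$.  With $\zeta_n\to B(\rho,\beta)$ for $\beta>0$, $\zeta_n \sim \log n$ for $\beta=0$, and (via the substitution $\alpha = 1-s/n$) $\zeta_n \sim -\Gamma(\beta)n^{-\beta}$ for $\beta\in(-1,0)$, this immediately yields the three mean scalings.  For the distributional statements I change variables to $s=1-\alpha$: in case (1) the posterior converges in total variation to Beta$(\rho,\beta)$, so $\#B_{1,n}/n\Rightarrow$ Beta$(\beta,\rho)$ by the binomial law of large numbers; in case (2) the $s$-density is approximately $n$ on $(0,1/n)$ and $1/s$ on $(1/n,1)$, so after normalizing by $\log n$ one has $-\log s/\log n\Rightarrow U\sim$ Uniform$(0,1)$ and hence $\log\#B_{1,n}/\log n \Rightarrow U$; in case (3) the rescaling $v=ns$ gives a limiting density proportional to $(1-e^{-v})v^{\beta-1}$ on $(0,\infty)$, and conditional on $v$ the binomial is asymptotically Poisson$(v)$, so a direct integration of $\Pr(\text{Poisson}(v)=d)$ against this density recovers $g_\beta(d) = \Gamma(d+\beta)/(d!(-\Gamma(\beta)))$.

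The asymptotics of $E[D_n]$ follow by iterating the Markov recursion $D_n \stackrel d= 1 + D_{n-\#B_{1,n}}$.  For $\beta>0$, let $X_j$ be iid Beta$(\beta,\rho)$ and $n_j = n_{j-1}(1-X_j)$; then $D_n$ is essentially $\min\{j:n_j<1\}$, and a renewal argument applied to the positive-mean variables $-\log(1-X_j)$ yields $E[D_n]\sim\log n/(\psi(\rho+\beta)-\psi(\rho))$.  For $\beta=0$, a typical $\#B_{1,n}\approx n^U$ leaves a residual of order $n$; comparing the stochastic recursion with the deterministic equation $f(n) = 1 + f(n(1-1/(\rho\log n)))$, whose solution is $(\rho/2)\log^2 n$, gives $E[D_n]\sim\log^2 n$.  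The same argument applies to the gamma process because its singleton probability $q_n(1)=1/(\rho\zeta_n)$ agrees with the harmonic case to leading order.  For $\beta\in(-1,0)$, first blocks have typical size $O(1)$ but mean $n^{1+\beta}$, and iterating the recursion a deterministic number of times comparable to $n/n^{1+\beta}$ exhausts the sample, giving $E[D_n]\sim n^{-\beta}$.  The GEM stick-breaking representation in case (1) is then immediate from self-similarity: the relative sizes form a product of iid Beta$(\beta,\rho)$ sticks.

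The main obstacle is pinning down the leading constant in the $\log^2 n$ rate of case (2), where the log of the step size is not integrable in the usual sense, so naive iteration is not sharp.  Controlling it requires uniform estimates on both $E[\#B_{1,m}]$ and the conditional tail of $\#B_{1,m}/m$ across all $m\le n$, followed by comparison with a first-order ODE in $\log n$.  A parallel care is needed in case (3), since $g_\beta$ has infinite mean and occasional large first blocks could in principle disturb the typical trajectory; a second-moment bound on $\#B_{1,n}/n^{1+\beta}$, which is finite because $\beta>-1$, is enough to close this gap.
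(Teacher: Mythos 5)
Your argument is sound and reaches every claim in the lemma, but it travels a genuinely different road from the paper's. The paper works analytically with the mean recurrence $\mu_n = 1+\sum_{d}\binom{n}{d}q(n-d,d)\,\mu_{n-d}$: it expands $\binom{n}{d}q(n-d,d)$ via Gamma-function asymptotics, substitutes the ansatz $\mu_n\sim c\log n$, $c\log^2 n$ or $c\,n^{-\beta}$, and approximates the resulting sum by an integral to identify $c$; the block-size statements are then obtained by separate direct computations of $\pr(\#B_{1,n}=d)$ and, in the harmonic/gamma case, of the distribution function of $\log \#B_{1,n}/\log n$. You instead exploit the posterior-mixture representation (draw $\alpha$ from $(1-\alpha^n)\varpi(d\alpha)/Z_n$, then Binomial$(n,1-\alpha)$ restricted to $d\ge 1$), which delivers the exact identity $E[\#B_{1,n}]=nB(\rho,\beta+1)/\zeta_n$ and reduces all three distributional limits to asymptotics of the posterior on $\alpha$ --- a cleaner and more unified derivation than the paper's case-by-case manipulation of Gamma ratios (your case-(3) Poisson mixture reproduces $g_\beta(d)=\Gamma(d+\beta)/(d!\,(-\Gamma(\beta)))$ exactly). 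For $E[D_n]$ you replace the mean recurrence by the stochastic fixed point $D_n\stackrel{d}{=}1+D_{n-\#B_{1,n}}$, and the renewal argument for $\beta>0$ recovers the paper's constant $1/(\psi(\rho+\beta)-\psi(\rho))$. The one place where your reduction is too coarse is $\beta=0$: the deterministic comparison $f(n)=1+f\bigl(n(1-1/(\rho\log n))\bigr)$ iterates on the \emph{mean} block fraction, whereas the correct per-step decrement of $\log \Rsharp$ is $E[-\log(1-\#B_{1,n}/n)]\approx \Psi_1(\rho)/\log n$, which is dominated by the rare macroscopic blocks (recall $\log\#B_{1,n}/\log n\Rightarrow U$) rather than by the mean; your recursion therefore produces the constant $\rho/2$ in place of the paper's $1/(2\Psi_1(\rho))$. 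Since the lemma asserts only the order $\log^2 n$, and you explicitly flag the constant as the delicate point, this does not undermine the result, but the repair is to iterate on the logarithm of the risk-set size rather than on its mean.
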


The proof can be found in Appendix~\ref{app:exp_num_blks}.  

\section{Exchangeable mixture model}
\subsection{Introduction}
The development in this section follows closely
Kalbfleisch (1978), Clayton (1991), and Hjort (1990).
It considers survival processes driven by an arbirary, 
completely independent, stationary random measure,
not necessarily the gamma process.
This section considers only exchangeable processes, but 
these methods can be easily extended to handle inhomogeneity.

\subsection{Completely independent random measure}
A non-negative measure $\Lambda$ on $\Real$ is said to be \emph{completely independent}
if the random variables $\Lambda(A_1),\ldots, \Lambda(A_n)$ are independent whenever 
$A_1,\ldots, A_n $  are disjoint subsets
(Kingman 1993, chapter~8),
and \emph{stationary} if the distribution is unaffected by translation in~$\Real$.
The distribution of $\Lambda(A)$ 
is necessarily infinitely divisible, and the characteristic exponent
\begin{equation}\label{characteristicexponent}
\log E(e^{-t \Lambda(A)}) = -\nu(A)\, \Psi(t)
\end{equation}
for $t \ge 0$ is essentially the same as the cumulant generating function.
The characteristic exponent need not be analytic at the origin,
so, even if the cumulants are finite, 
there need not be a Taylor expansion to generate them.
The measure is stationary if $\nu(A) = \nu \,|A|$ is proportional to
Lebesgue measure on $\Real$, which is subsequently assumed for simplicity of
notation (i.e. $\nu \ge 0$, a constant).
The L\'evy-Khintchine characterization for positive random variables implies
\[
\Psi(t) = \gamma t + \int_0^\infty (1 - e^{-z t}) \, dw(z)
\]
for some $\gamma \ge 0$ and some measure $w$ on $(0,\infty]$, called the L\'evy measure,
such that the integral is finite for $t > 0$.

The random measure has a simple interpretation in terms of a Poisson point process
$X\subset \Real\times(0,\infty)$ with L\'evy intensity $dt\, dw(z)$ such that
\[
\Lambda(A) = \gamma|A| + \int_{A\times (0,\infty)} z\, dX
\]
is Lebesgue measure plus the sum of the $z$-components of
the points of $X$ that fall in $A\times(0,\infty)$.
If the Lebesgue component is missing, i.e.~$\gamma=0$, then
$\Lambda$ has countable support.
If the L\'evy measure is finite, then $\Lambda$ has finitely many atoms in bounded subsets;
otherwise, if $w$ is not finite, $\Lambda$~has countable dense support.

\subsection{Survival process}
Let $\Lambda$ be a stationary, completely independent, random measure on $\Real$,
and let $T_1, T_2,\ldots$ be conditionally independent and identically distributed
random variables such that
\[
\pr(T_i > t \given \Lambda) = \exp(-\Lambda(0, t])
\]
for $t \ge 0$.
In other words, the survival times are non-negative
with cumulative conditional hazard $H(t) = \Lambda(0, t]$.
Conditional on $\Lambda$, the multivariate survival function is
\[
\pr(T_1 > t_1,\ldots, T_n > t_n \given \Lambda) = \exp\Bigl(-\int_0^\infty \Rsharp(s) \,d\Lambda(s) \Bigr),
\]
where $\Rsharp(s) = \#\{i \colon t_i \ge s\}$.
From the definition (\ref{characteristicexponent}) of the characteristic exponent, 
the unconditional joint survival function is
\begin{equation}\label{jsf}
\pr(T_1 > t_1,\ldots, T_n > t_n) = \exp\Bigl(-\nu\int_0^\infty \Psi(\Rsharp(s) )\, ds\Bigr),
\end{equation}
depending on the the characteristic exponent at integer values.
The following connects the survival process associated with completely independent random
measures to the set of Markov survival processes.

\begin{theorem}
The unconditional risk set evolves as a Markov process with characteristic index $\zeta_n =\nu \Psi(n)$ evaluated at the positive integers.
\end{theorem}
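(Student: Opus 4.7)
The plan is to identify the risk set as a continuous-time Markov chain, compute its transition rates from the L\'evy--Khintchine structure of $\Lambda$, and match them against the rate expression implied by Proposition~\ref{char_index_prop}, namely $\lambda(r,d) = (-1)^{d-1}(\Delta^d\zeta)_r$.

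First I would establish the Markov and homogeneity properties. Because $\Lambda$ is completely independent, its restrictions to $(0,t]$ and $(t,\infty)$ are independent; combined with the conditional iid structure of $(T_i)$ given $\Lambda$, this implies that given the current risk set $R(t)$ the future trajectory depends on the past only through $\Rsharp(t)$ (by exchangeability of the survivors). Stationarity of the intensity gives time-homogeneity. Setting $t_1=\cdots=t_n=t$ in (\ref{jsf}) gives $\pr(\min_i T_i>t)=\exp(-\nu t\,\Psi(n))$, so the holding time with $n$ at risk is exponential of rate $\nu\Psi(n)$. This identifies the candidate characteristic index as $\zeta_n=\nu\Psi(n)$.

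Next I would decompose this total rate by subset size, using the Poisson representation of $\Lambda$: drift $\gamma$ plus a Poisson atom process on $\Real\times(0,\infty)$ with intensity $\nu\,dt\,dw(z)$. Conditional on an atom of size $z$ at time $s$, each of the $r+d$ surviving particles independently dies at $s$ with probability $1-e^{-z}$, while the drift produces only singleton deaths at per-particle rate $\nu\gamma$. The rate at which a specific subset of size $d$ fails first is therefore
\[
\lambda^*(r,d)=\nu\int_0^\infty e^{-zr}(1-e^{-z})^d\,dw(z)+\nu\gamma\,\mathbf{1}_{\{d=1\}}.
\]
A direct calculation using $\sum_{j=0}^d(-1)^j\binom{d}{j}e^{-zj}=(1-e^{-z})^d$, together with the vanishing of $d$th differences of $n\mapsto n$ for $d\ge 2$, shows $\lambda^*(r,d)=(-1)^{d-1}(\Delta^d\zeta)_r$. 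The binomial identity $\sum_{d=1}^n\binom{n}{d}(1-e^{-z})^d e^{-z(n-d)}=1-e^{-zn}$ then gives $\sum_d\binom{n}{d}\lambda^*(n-d,d)=\zeta_n$, so $q(r,d)=\lambda^*(r,d)/\zeta_{r+d}$ is a genuine splitting distribution matching (\ref{zetacondition}).

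The most delicate step will be the Markov property itself when the L\'evy measure is infinite and $\Lambda$ has countable dense support. Although $\Lambda$ can carry infinitely many atoms in any bounded interval, only finitely many of them kill at least one of the (finitely many) surviving particles, because the total event rate $\nu\Psi(n)$ is finite. This point requires careful book-keeping with the Poisson intensity rather than any new idea, and once it is in hand the rate computation above completes the identification of the unconditional process with the Markov survival process of characteristic index $\zeta_n=\nu\Psi(n)$.
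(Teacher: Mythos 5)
Your proposal is correct, and it reaches the rate $\lambda(r,d)=(-1)^{d-1}(\Delta^d\zeta)_r$ by a genuinely different route from the paper. The paper works at the level of densities: it writes the conditional density of the trajectory given $\Lambda$ as $\exp(-\int\Rsharp\,d\Lambda)\prod_j(1-e^{-\Lambda(dt_j)})^{d_j}$, uses complete independence to factor the expectation over infinitesimal intervals, and evaluates $E\bigl(e^{-r\Lambda(dt)}(1-e^{-\Lambda(dt)})^d\bigr)$ by binomial expansion and the identity $E(e^{-s\Lambda(A)})=e^{-\nu|A|\Psi(s)}$, obtaining $\nu(-1)^{d-1}(\Delta^d\Psi)(r)\,dt+o(dt)$ and hence the joint density (\ref{jd}) directly. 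You instead work at the level of the generator: you identify the holding rate $\nu\Psi(n)$ from (\ref{jsf}), then decompose it by subset size via the Poisson atom representation and independent thinning (each survivor dies at an atom of size $z$ with probability $1-e^{-z}$), and verify normalization with the binomial identity. The two computations of the infinitesimal rate are equivalent --- one via the characteristic exponent, one via the explicit L\'evy--Khintchine decomposition --- but your version makes several things explicit that the paper leaves implicit: the Markov property itself (from independence of $\Lambda$ on disjoint intervals plus exchangeability of survivors), the correspondence between the drift $\gamma$ and the erosion coefficient $c$ and between $w$ and the dislocation measure $\varpi$ of Proposition~\ref{int_rep_prop} (via $x=e^{-z}$), and the book-keeping needed when $w$ is infinite and $\Lambda$ has dense support. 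The price is that you must separately check that $q(r,d)=\lambda^*(r,d)/\zeta_{r+d}$ sums to one, which the paper gets for free since its conditional density integrates to one; the paper's route also hands over the explicit joint density formula that is reused later for likelihood calculations.
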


\begin{proof}
Given $\Lambda$, the conditional density  at the risk-set trajectory $R$ consisting of
$k\le n$ failure times is
\[
\exp\biggl(-\int_0^\infty \!\Rsharp(t)\, d\Lambda \biggr) \prod_{j=1}^k
	\bigl(1 - e^{-\Lambda(dt_j)}\bigr)^{d_j}
\]
where $d_j \ge 1$ is the number of failures at $t_j$.
Bearing in mind that $\Lambda$ is completely independent and that
$E(e^{-s \Lambda(A)}) = e^{-\nu |A| \Psi(s)}$, the contribution to the unconditional density
of one failure time
with $\Rsharp(t^-) = d+r$ and $\Rsharp(t) = r$ is an alternating sum
\begin{eqnarray*}
E\Bigl( e^{-r \Lambda(dt)}\, \bigl(1 - e^{-\Lambda(dt)})^d \Bigr) &=&
\sum_{j=0}^{d} (-1)^j \binomial {d} j  e^{-\nu \, dt\,\Psi(r+j)} \\
 &=& \nu\, dt\, \sum_{j=0}^{d} (-1)^{j+1}\binomial {d} j \Psi(r+j) + o(dt)\\
&=& \nu\, (-1)^{d-1} (\Delta^d\Psi)(r)\, dt + o(dt).
\end{eqnarray*}
Note that $(-1)^{d-1} (\Delta^d\Psi)(r) \ge 0$ and is decreasing in both arguments.
It follows that the joint density at $R$ is
\begin{equation}\label{jd}
f_n(R) = \nu^k\, \exp\biggl(-\nu \int_0^\infty \Psi\bigl(\Rsharp(s)\bigr) \,ds \biggr)
	\times
\prod_{j=1}^k (-1)^{d_j-1} (\Delta^{d_j}\Psi)(r_j)\, dt_j
\end{equation}
which is the same as (\ref{density}).
\end{proof}

A followup question of interest is whether the converse is also true.  Theorem~\ref{one_to_one_conv}
answers in the affirmative.

\begin{theorem} \label{one_to_one_conv}
Every Markov survival process is generated by a completely independent random measure
\end{theorem}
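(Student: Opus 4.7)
The plan is to invert the construction of the previous theorem: given a Markov survival process with characteristic index $\zeta$, I would produce a drift $\gamma \ge 0$ and a L\'evy measure $w$ on $(0,\infty)$ such that
\[
\Psi(t) \defeq \gamma t + \int_0^\infty (1 - e^{-tz})\, w(dz)
\]
satisfies $\Psi(n) = \zeta_n$ for every positive integer $n$. Once the pair $(\gamma, w)$ is in hand, the random measure $\Lambda(A) = \gamma |A| + \int_{A \times (0,\infty)} z\, dX$, with $X$ a Poisson point process of intensity $dt\, w(dz)$, is stationary and completely independent with characteristic exponent $\Psi$, and the preceding theorem identifies the survival process it drives as the one with characteristic index $\zeta$.

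The key analytic step is converting $\zeta$ into the pair $(\gamma, w)$. I would invoke Proposition~\ref{int_rep_prop}, which supplies a dislocation measure $\varpi$ on $[0,1)$ and erosion coefficient $c \ge 0$ satisfying
\[
(-1)^{d-1} (\Delta^d \zeta)_r = \int_0^1 x^r (1-x)^d\, \varpi(dx) + c \cdot \mathbf{1}_{\{d=1\}}.
\]
Specializing to $d = 1$ and telescoping $\zeta_n = \sum_{r=0}^{n-1} (\Delta \zeta)_r$ yields the integral representation
\[
\zeta_n = \int_{[0,1)} (1 - x^n)\, \varpi(dx) + c n.
\]
The change of variables $x = e^{-z}$ transports $\varpi$ to a measure $w$ on $(0, \infty)$, and the identity $1 - x^n = 1 - e^{-nz}$ recasts the display as the L\'evy--Khintchine form above with $\gamma = c$. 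The required L\'evy integrability $\int_0^\infty (1 \wedge z)\, w(dz) < \infty$ is exactly the hypothesis $\int_{[0,1)} (1-x)\, \varpi(dx) < \infty$ from Proposition~\ref{int_rep_prop}: near $x = 1$ one has $1 - x \sim z$, producing $\int_0^1 z\, w(dz) < \infty$, while finiteness of $\varpi$ on compact subsets of $[0,1)$ translates to $w([1,\infty)) < \infty$.

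The main obstacle is really Proposition~\ref{int_rep_prop} itself, which is where the analytic content lies: turning the discrete positivity conditions $(-1)^{d-1}(\Delta^d\zeta)_r \ge 0$ into an honest measure on $[0,1)$ is a Hausdorff-type moment problem, which is solvable precisely because the sequence $\{(\Delta \zeta)_r\}_r$ is completely monotone. Once that bridge is granted, the remaining work is routine bookkeeping: the joint density (\ref{density}) parametrized by $\zeta$ and the density (\ref{jd}) obtained by integrating out $\Lambda$ agree term by term as soon as $\Psi(n) = \zeta_n$, since both reduce to the same expression in the differences $(-1)^{d-1}(\Delta^d \zeta)(r)$. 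A minor point requiring care is that no mass of $\varpi$ is lost at $x = 1$ under the logarithmic substitution, but this is ensured by extracting the drift $c$ separately via the erosion coefficient rather than through $\varpi$.
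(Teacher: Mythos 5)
Your proposal is correct and follows essentially the same route as the paper: both arguments rest on the integral representation of Proposition~\ref{int_rep_prop} and the substitution $x = e^{-z}$ to convert the dislocation measure $\varpi$ and erosion coefficient $c$ into a L\'evy measure $w$ and drift $\gamma$, then verify the integrability condition. The only cosmetic difference is that the paper matches the singleton splits $q(n,1)$ to the first differences $\Psi(n+1)-\Psi(n)$, whereas you telescope to identify $\zeta_n$ with $\Psi(n)$ directly.
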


\begin{proof}
Recall that the set of singleton splitting rules, $\{ q(n,1) \}$, completely determine the splitting rule by consistency.

Equation~(\ref{integral_representation}) provides an integral representation for the singleton splitting rule for all Markov survival processes.
\[
q (n,1) \propto c + \int s^n (1-s) \varpi (ds)
\]

Suppose there were an associated completely independent random measure.  Then the L\'evy-Khintchine characterization implies that the singleton splitting rule is
\begin{align}
q(n,1) &= \nu \cdot \left( \Psi (n+1) - \Psi(n) \right) \\
&= \nu \cdot \left( \gamma + \int_0^\infty e^{-n z} (1- e^{-z} ) w( dz ) \right)
\end{align}
Set $\gamma = c$, and $w(dz) = e^{-z} \varpi ( -\log (s) \in dz )$ and without loss of generality assume $\nu = 1$, as this corresponds to the standardized form of the characteristic index. It rests to check that the measure $w$ is a L\'evy measure.  It is easy to show that
\[
\int_0^1 (1-s) \, \varpi (ds) < \infty \Rightarrow \int_0^\infty (1 - e^{-zt} ) w(dz) < \infty, \hspace{0.3cm} t \geq 0
\]
Therefore the Markov survival process is generated by a completely independent random 
measure.
\end{proof} 

\subsection{Example: homogeneous gamma process}
The density of the gamma distribution $Z\sim \Ga(\nu,\rho)$ at $z > 0$ is
\[
z^{\nu - 1} \rho^\nu e^{- z \rho} / \Gamma(\nu).
\]
For $\nu > 0, \rho > 0$,
the distribution has finite moments of all orders, and moment generating function
\begin{eqnarray*}
M(t) = E(e^{tZ}) &=& \biggl( \frac \rho {\rho-t} \biggr)^\nu \\
K(t) = \log M(t) &=& -\nu\log\bigl( 1 - t / \rho \bigr) = -\nu \Psi(-t)
\end{eqnarray*}
for $t < \rho$.
It follows that the distribution is infinitely divisible with $r$th cumulant $\nu \, \Gamma(r) / \rho^r$. 
It is convenient  to extend the parameter space to include $\nu=\infty$,
in which case $\Ga(\infty, \rho)(\{\infty\}) = 1$ for all $\rho > 0$,
i.e.,~unit mass is assigned to the point at infinity.

The \emph{homogeneous gamma process} with parameter $(\nu, \rho)$ on the real line is a 
stationary random measure $\Lambda$, which has the following properties:
\begin{itemize}
\item{} for each Borel subset $A\subset \Real$ of Lebesgue measure $|A|$,
	the random variable $\Lambda(A)$ is distributed according
	to the gamma distribution $\Ga(\nu |A|, \rho)$;
\item{} the values $\Lambda(A_1),\ldots, \Lambda(A_n)$ assigned by $\Lambda$ to
	disjoint subsets $A_1,\ldots, A_n$ are independent random variables;
\item{} with probability one, $\Lambda$ is purely atomic with countable dense support.
\end{itemize}

To explain the reasoning behind the last point, let $\bfX$ be a Poisson process
with intensity $\nu\, z^{-1}e^{-\rho z}\, dx\, dz$ at $(x, z)$
in the product space $\Real\times(0,\infty)$.
Then
\[
\Lambda(A) = \int_{A\times(0,\infty)} z\, d\bfX
\]
is distributed as the gamma process with parameter $(\nu, \rho)$.
In other words, to each $(x, z) \in \bfX$ there corresponds an atom
of $\Lambda$ such that $\Lambda(\{x\}) = z$.
If $0 < |A| < \infty$, the set $\bfX \cap (A\times(0,\infty))$
is infinite, but the subset
$\bfX \cap (A\times(\epsilon,\infty))$ is finite for every $\epsilon > 0$.
The value assigned by $\Lambda$ to $A\subset\Real$ is the
sum of the $z$-components of all points of $\bfX$ that lie in $A\times(0,\infty)$,
and this sum is finite with probability one.
Since there are no coincident atoms, i.e.,~no pairs of points in $\bfX$
whose $x$-components are equal, the Poisson process is equivalent to $\Lambda$.
For further details, see Kingman (1993, chapter~8).

The characteristic index for the gamma process is $\Psi(t) = \log(1+t/\rho)$,
which means that
\begin{eqnarray}\nonumber
\Delta\Psi(t) &=& \log(\rho+t+1) - \log(\rho+t) \\
\nonumber
(-1)\Delta^2\Psi(t) &=& -\log(\rho+t+2) + 2\log(\rho+t+1) - \log(\rho+t) \\
\nonumber
(-1)^{d-1} \Delta^d\Psi(t) &=& \sum_{j=0}^d (-1)^{j-1} \binomial d j \log(\rho+t+j) \\
\label{gamma_approx}
	&\simeq& \Gamma(d) \bigm/ \ascf{(\rho+t+1/2)} d.
\end{eqnarray}
The joint marginal density of the survival times in the gamma process is
obtained by substitution into (\ref{jd}):
\begin{eqnarray*}
\nu^k\, \exp\biggl(-\nu \int_0^\infty \log\bigl(1+\Rsharp(s)/\rho\bigr) \,ds \biggr)
        \times
\prod_{j=1}^k (-1)^{d_j-1} (\Delta^{d_j}\log)(\rho+r_j)\, dt_j
\end{eqnarray*}
Approximation (\ref{gamma_approx}), which holds with relative error $O(1/t^2)$ for large~$t$,
is the $d$th order forward difference of the characteristic exponent
$\psi(\rho'+t) - \psi(\rho')$,
of the harmonic process with parameter $\rho'=\rho+1/2$.
The discrete component is thus approximately a product over failure times
\[
\prod_{j=1}^k \frac{\Gamma(d_j)} {\ascf{(\rho' + r_j)} {d_j}}
\]
where $d_j$ is the number of failures and $r_j = \Rsharp(t_j)$
is the size of the risk set at time~$t_j$.
In particular, if there are no censored records,
the joint density for the harmonic process with parameter~$\nu,\rho$ is
\[
\frac{\nu^k}{\ascf\rho n}
	\exp\biggl(-\nu \int_0^\infty \bigl(\psi(\rho + \Rsharp(s)) - \psi(\rho)\bigr) \, ds \biggr)
	\prod_{j=1}^k \Gamma(d_j),
\]
where $\sum d_j = n$ and the integral in the exponent may be written as a finite sum
\[
\sum_{k=0}^{n-1} \frac{t_{(n-k)}}{k+\rho}
\]
over the ordered survival times $t_{(1)} \leq t_{(2)} \leq \cdots \leq t_{(n)}$.

\subsection{Parameter estimation} \label{param_est}
Consider the problem of parameter estimation for a two-parameter Markov
survival process with characteristic index of the form
$\zeta_n = \nu \Psi(n)$ where $\Psi(n) = \Phi(n+\rho) - \Phi(\rho)$
for $\nu > 0$, $ \rho > 0$ with $\Phi(\cdot)$ given.
Such a family is generated from a family of L\'evy measures proportional to
$w(dz) e^{-\rho z}$, so $\Lambda$ is expected to have larger atoms if $\rho$ is small.
The gamma and harmonic processes are of this form with $w((0,1))$ and $w((1,\infty))$
both infinite.

The first goal is to estimate the parameters $\rho, \nu$, from observations
$T_1,\ldots, T_n$, some of which may be right censored.
Consistency as $n\to\infty$ is not to be expected, nor is it necessarily important.
Parameter estimation is usually an intermediate step, which is needed primarily
to compute the Bayes estimate, or empirical Bayes estimate, of the survival distribution
$F(t) = \pr(T_{n+1} > t \given\hbox{data})$.

For fixed $\rho$, the survival model (\ref{jd}) is a one-parameter family
with a two-dimensional sufficient statistic
\[
k,\quad\int_0^\infty \Psi(\Rsharp(t))\, dt,
\]
where $1\le k\le n$ is the number of blocks, or more generally
the number of distinct death times.
The first derivative of the log likelihood with respect to $\log\nu$ is
$k - \nu\int\Psi(\Rsharp(t))\, dt$,
the maximum-likelihood estimate is the ratio
\begin{equation}\label{mle}
\hat\nu^{-1} = \frac 1 k \int_0^\infty \Psi(\Rsharp(t))\, dt.
\end{equation}
The Fisher information for $\log\nu$ is $E(k)$,
suggesting that the asymptotic variance of $\log\hat\nu$ is $1/E(k)$.
For the gamma and harmonic processes, $\log(n) < E(k) < n^\epsilon$
implies that the estimator is consistent in the absence of censoring.
but the rate of convergence is very slow.

Estimation of $\rho$ by maximum likelihood is certainly feasible,
but a little more difficult.
One natural option is to consider the product
of the per-particle death rate $\nu\Psi(1)$
 and the total particle time at risk $\int\Rsharp(t)\, dt$, 
and to estimate $\rho$ by setting the product  to the observed number of deaths, i.e.,
\begin{equation}\label{deathrate}
\#\hbox{deaths} = \hat\nu\hat\Psi(1) \int_0^\infty\Rsharp(t)\, dt.
\end{equation}
In the absence of censoring, this is equivalent to setting
the mean survival time $\bar T_n$ to its expected value $1/\hat\zeta_1$.
But $\cov(T_i, T_j) = 1/\zeta_2^2$ for each pair $i\neq j$ implies that
\[
\var(\bar T_n) = 1/(n\zeta_1^2) + (n-1)/(n\zeta_2^2) \to 1/\zeta_2^2
\]
does not tend to zero as $n\to\infty$.
Nonetheless, this second equation
is less sensitive than the first to rounding of survival times,
which is a desirable property for applied work.
The parameter pair can be estimated by iteration.

\subsection{Numerical example}

Consider parameter estimation for a set of failure and censoring times (in weeks) of
the 6-MP subset of leukemia patients taken from Gehan (1965):
\[
6,6,6,6^\star, 7, 9^\star, 10 , 10^\star, 11^\star, 13, 16 , 17^\star, 19^\star, 20^\star, 22, 23, 25^\star, 32^\star, 32^\star, 34^\star, 35^\star
\]
There are $9$ uncensored observations, and a total risk time of $359$ weeks.  
Assuming the survival times are iid exponential with rate parameter, $\theta$, 
then the maximum likelihood estimate of $\theta$ is given by $9/359$, or an expected survival time of $39.89$ weeks.

Consider the two-parameter Markov survival process defined in section~\ref{param_est}, specifically the harmonic and gamma processes. Table~\ref{gehan_table} provides maximum likelihood estimates for $\rho$ and $\nu$. For the gamma process, the empirical Bayes estimate of the rate is then $\hat{\nu} \cdot \log ( 1+ \hat{\rho}^{-1} ) \approx 2.47 \times 10^{-2}$, implying the expected survival time is $40.52$ weeks.  The expected time is the same for the harmonic process.
\begin{table}[!h]
\centering
\caption{ Maximum likelihood estimates for two processes}
\vspace{0.2cm}
\begin{tabular}{r r r c r r}
& \multicolumn{2}{c}{Harmonic process} & & \multicolumn{2}{c}{Gamma process} \\ \cline{2-3} \cline{5-6}
Parameter & Est. & Std. Error & &  Est. & Std. Error\\ \hline
$\rho$ & 21.45 & 19.63 & & 20.95 & 19.61 \\
$\nu$ & 0.53 & 0.44 & & 0.53 & 0.44\\ \hline
\end{tabular}
\label{gehan_table}
\end{table}

Estimation using the maximum likelihood estimate of $\nu$ given $\rho$ and the natural relation between the marginal survival rate associated with the gamma process and $\hat{\theta}$
\[
\hat{\nu} (\rho) \left (\Phi(1+\rho) - \Phi(\rho) \right)= \hat{\theta} = 9/359
\]
yields $(\hat{\rho},\hat{\nu}) = (19.24, 0.49)$
for the gamma process and $(19.73, 0.49)$ for the harmonic process.  Supplementary figure~\ref{prof_lik}
shows that the profile likelihood for $\rho$ is relatively flat for values sufficiently removed from the origin.  For the harmonic process, the figure suggests a $95\%$ confidence interval of approximately $[1.3,5.1]$ for $\log(\rho)$, while under the gamma process, there is an approximate confidence interval of $[1.2, 5.1]$.  Twice the difference in the log-likelihoods at their respective maxima is $8.12 \times 10^{-5}$.

Figure~\ref{gehan_figure} shows the conditional survival distribution given
the observed risk set trajectory.  The empirical Bayes estimate
of the conditional distribution for the harmonic process is approximately equal to that of the 
gamma process.  Both are approximately an average of the Kaplan-Meier product limit estimator and the maximum likelihood exponential estimator of the conditional survival distribution.

\begin{figure}[!h]
\centering
\includegraphics[width = 4in]{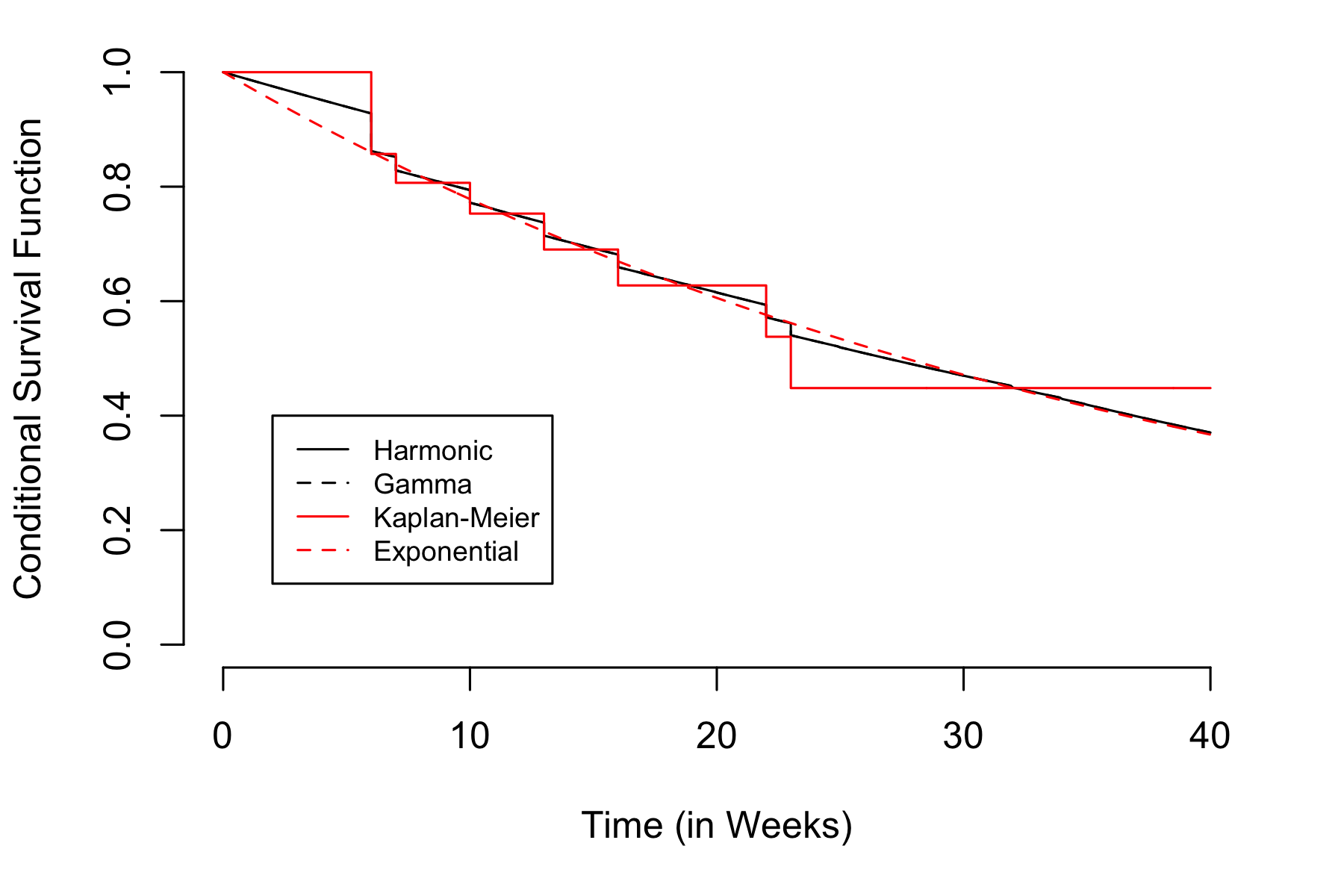}
\caption{Conditional survival distribution for leukemia patients}
\label{gehan_figure}
\end{figure}

\section{Ties as a result of numerical rounding}

Up to this point, individuals having the same recorded survival time
are regarded as failing simultaneously.
Now consider the case where the actual failure times are distinct,
so that tied values arise solely as a result of numerical rounding.
The integral in the exponent is a continuous function of the
risk set trajectory $R(t)$, so an $\epsilon$-perturbation of failure
times has an $O(\epsilon)$ effect on the integral, which is ignored.
However, the remaining term is not a continuous function of the observations,
so an $\epsilon$-perturbation by rounding may have an appreciable
effect on the likelihood.
Most obviously, the statistic $k$, the number of distinct failure times,
is not continuous as a function of~$T$;
if ties are an artifact of rounding, then $k$ is the total
number of failures.

While the likelihood and parameter estimation are affected by ties as a result of numerical rounding, the conditional survival distribution for the harmonic process given $\rho$ and $\nu$ is unaffected due to the weak continuity of predictive distributions.  This suggests it may be best to regard $\rho$ as a fixed ``tuning parameter''.  As all other processes have discontinuous predictive distributions, the use of the harmonic process in applications where ties are likely the result of numerical rounding seems most natural.

\section{Conclusion}

Presented here is the set of exchangeable, consistent Markov survival processes defined by their characteristic index, $\zeta$.  These processes are in correspondence with the set of completely independent random measures, each determined by the characteristic exponent.  

Markov survival processes exhibit multiple tied failure times, generating exchangeable partial rankings among individuals.  It has been shown that the number of blocks depends on the characteristic index.  The harmonic and gamma processes are studied in detail, showing the number of blocks grows asymptotically at a rate of $\log^2 (n)$. Censoring is easily incorporated and parameter estimation given a set of survival and censoring times is examined.  The impact of ties as the result of numerical rounding concludes the discussion.

\newpage

\appendix

\section{Supplementary Figures}

\begin{suppfigure}[!h]
\begin{center}
\includegraphics[height=15cm,width=12.5cm, angle=0]{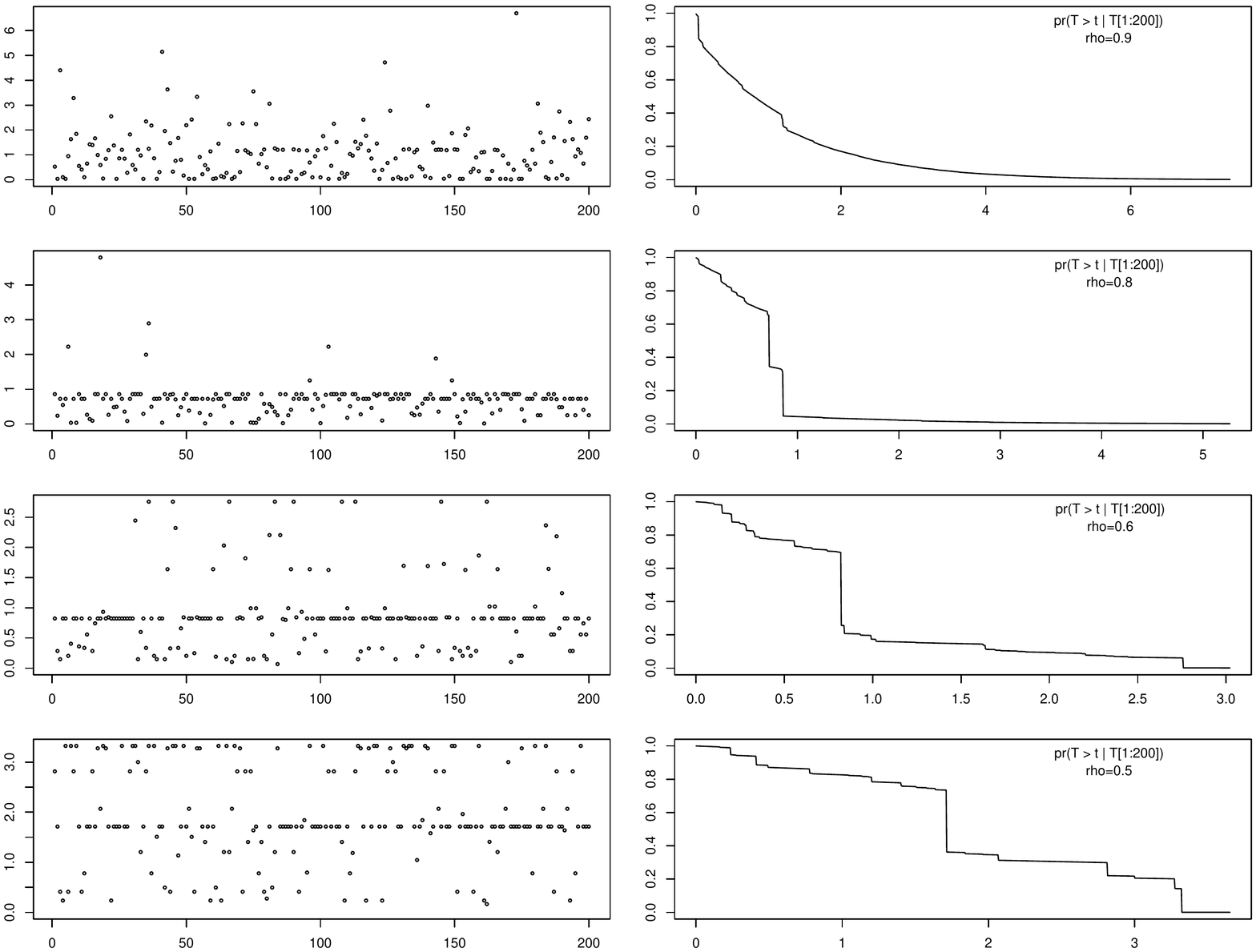}
\end{center}
\vspace{-0.5cm}
\caption{Simulated survival process with index $\zeta_n \propto n^\rho$,
together with the conditional survival function (right panel).
Parameter values: $\rho=0.9, 0.8, 0.6, 0.5$.
Among the first 200 failure times, the number of distinct values
was 150, 65, 42, 25, respectively, but these are highly variable from run to run.}
\label{power_simulation}
\end{suppfigure}

\newpage

\begin{suppfigure}[!h]
\begin{center}
\includegraphics[height=15cm,width=12.5cm, angle=0]{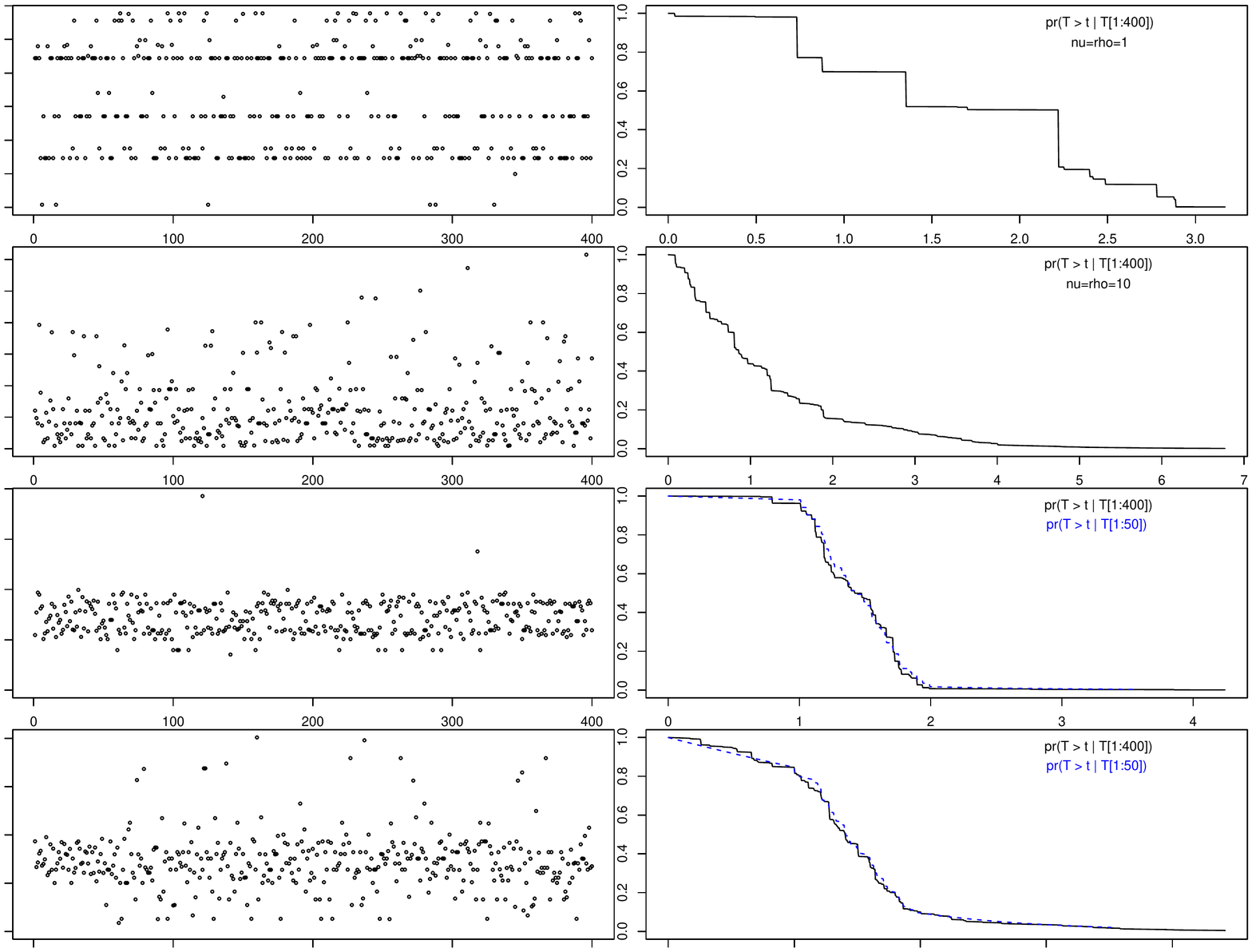}
\end{center}
\vspace{-0.5cm}
\caption{Simulated harmonic survival process, together with
the conditional survival function (right panel).
The lower two series are seeded with 50 initial values, independent, uniform on $(1, 2)$.
Parameter values: $\nu=\rho=1$ in rows 1, 3; $\nu=\rho=10$ in rows 2,~4.
Among the first 400 failure times, the number of distinct values
was 15, 93, 58, 110, respectively, including the initial seeds.}
\label{harmonic_simulation}
\end{suppfigure}

\newpage

\begin{suppfigure}[!h]
\centering
\caption{Profile Log-Likelihood of $\log(\rho)$}
\includegraphics[width = 4in]{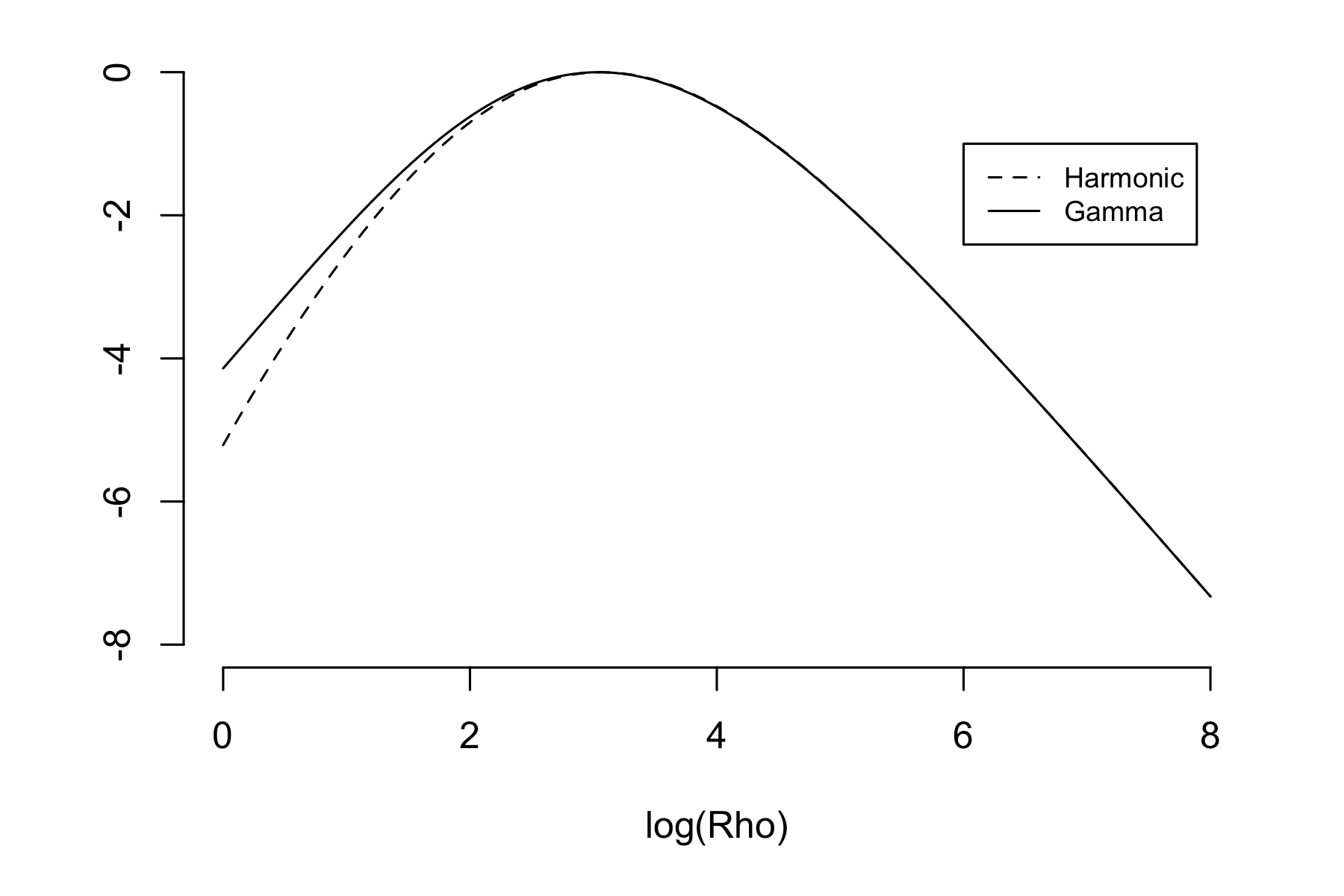}
\label{prof_lik}
\end{suppfigure}

\begin{suppfigure}[!h]
\centering
\caption{Profile log-likelihood of $\beta_1$}
\includegraphics[width = 4in]{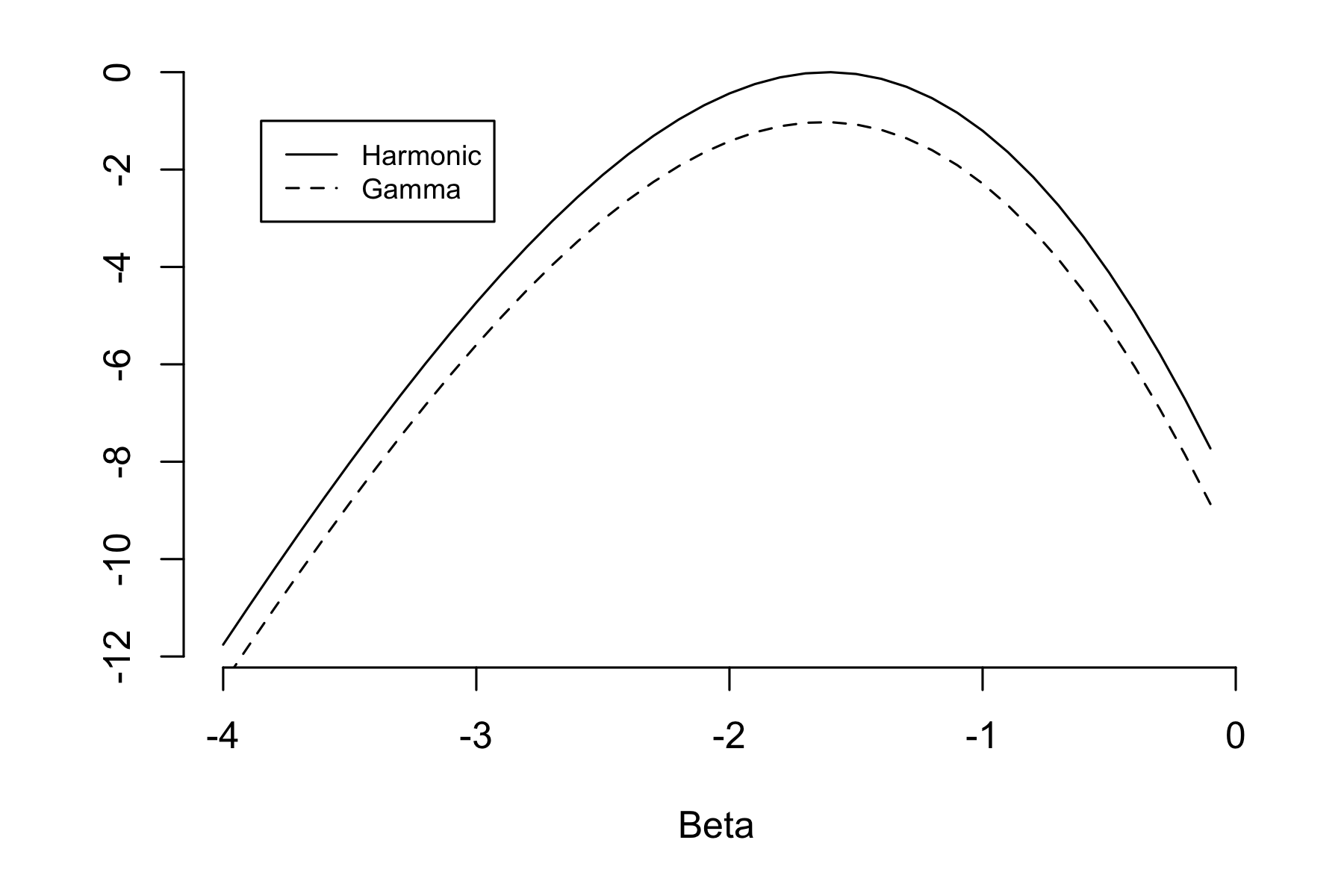}
\label{beta_profile}
\end{suppfigure}

\newpage 

\section{Proof of Proposition~\ref{char_index_prop}}
\label{app:char_index_proof}

\begin{proof}
A consistent splitting rule is completely determined by the set of singleton splitting rules, 
$\{ q(n,1) \given n \ge 0 \}$ where $q(0,1) = 1$ by definition.  
It is immediately apparent from the construction that
$q_{r,1} = \Delta\zeta_r / \zeta_{r+1} = 1 - \zeta_r/\zeta_{r+1}$.
Considering the standardized sequence and using this relation
yields a one-to-one correspondence between the set of singleton splitting rules 
and the characteristic index.  Therefore the characteristic index completely 
determines a splitting rule.  

Showing equation~(\ref{zetacondition}) yields a splitting rule that satisfies 
the consistency condition~(\ref{consistencycondition_eq})
completes the proof. The left hand side is given by
\begin{align*}
(1- q(n,1)) q(n-d,d) &= \frac{\zeta_{n}}{\zeta_{n+1}} \frac{(-1)^{d-1} (\Delta^d \zeta)_{n-d} } { \zeta_{n}} \\
&=  \frac{(-1)^{d-1} (\Delta^d \zeta)_{n-d} } { \zeta_{n+1}}
\end{align*}
The right hand side is given by
\begin{align*}
q(n-d+1,d)+q(n-d,d+1) &= \frac{(-1)^{d-1} (\Delta^d \zeta)_{n-d+1} } { \zeta_{n+1}} + \frac{(-1)^{d} (\Delta^{d+1} \zeta)_{n-d} } { \zeta_{n+1}} \\
&= \frac{(-1)^{d-1} } { \zeta_{n+1}} \cdot \left[ (\Delta^d \zeta)_{n-d+1} - (\Delta^{d+1} \zeta)_{n-d} \right] \\
&= \frac{(-1)^{d-1} } { \zeta_{n+1}} \cdot (\Delta^d \zeta)_{n-d}
\end{align*}
So the splitting rule automatically satisfies the consistency condition.  By definition, the splitting rule must be 
non-negative, and therefore the sequence defines a consistent splitting rule if the forward differences are non-negative,
$(-1)^{d-1} ( \Delta^d \zeta)_r \geq 0$ for all $r \geq 0$, $d \geq 1$.
\end{proof}

\section{Proof of Theorem~\ref{block_size_num_thm} }
\label{app:exp_num_blks}

\begin{proposition}
Define $D_n (\rho)$ denote the number of blocks given $n$ individuals and $\rho \in (0, \infty)$.  As $n \to \infty$,
\begin{equation*}
E [ D_n (\rho) ] \sim \log^2 (n)
\end{equation*}
for both the harmonic and gamma processes, where the ratio tends to a constant equal to $\frac{1}{2 \cdot \Psi_1 (\rho)}$ where $\Psi_1$ is the trigamma function.
\end{proposition}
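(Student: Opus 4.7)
The plan is to analyze the decreasing Markov chain $R_0=n > R_1 > \cdots > R_K=0$ of successive risk-set sizes, noting that $K = D_n$ by definition, and to pass to the logarithmic scale $Y_k=\log R_k$ so that the total descent $Y_0 - Y_K \sim \log n$ is divided into a large number of small steps, whose expected count is what we wish to determine.

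First I would use the correspondence of Theorem~\ref{one_to_one_conv} to describe the law of a single jump via the L\'evy measure $dw(\cdot)$ of the associated completely independent random measure.  Given $R_{k-1}=R$ with $R$ large, the next failure is caused by an atom whose size $z$ catches at least one of the $R$ surviving particles; conditional on this event, $z$ has density $(1-e^{-Rz})\,dw(z)/\zeta(R)$, where the normalization $\int (1-e^{-Rz})\,dw(z) = \zeta(R)$ is exactly the aggregate failure rate.  Since $R-\mathrm{Bin}(R,1-e^{-z})$ concentrates around $R e^{-z}$ for $R$ large, the log-ratio $W_k = \log(R_{k-1}/R_k)$ satisfies $W_k = z + o(1)$ uniformly on $z \gg 1/R$.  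Expanding $1/(1-e^{-w}) = \sum_{k\ge 0} e^{-kw}$ and integrating term-wise gives
\[
\int_0^\infty w\,\frac{e^{-\rho w}}{1-e^{-w}}\,dw \;=\; \sum_{k=0}^\infty \frac{1}{(\rho+k)^2} \;=\; \Psi_1(\rho),
\]
so that $E[W \given R] \sim \Psi_1(\rho)/\log R$ for the harmonic process; the gamma process is handled identically using $dw(z) = e^{-\rho z}/z\,dz$ and $\int_0^\infty w\,(e^{-\rho w}/w)\,dw = 1/\rho$.

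Second I would promote the one-step estimate to an asymptotic count of steps through a continuous-ODE approximation.  Writing $x$ for the cumulative descent from $\log n$ and treating the step index $k$ as continuous, the rate $dx/dk \approx \Psi_1(\rho)/(\log n - x)$ separates as $(\log n - x)\,dx = \Psi_1(\rho)\,dk$, and integrating from $(x,k) = (0,0)$ to $(x,k) = (\log n, K)$ produces $K \sim \log^2(n)/(2\Psi_1(\rho))$.  Equivalently, substituting the ansatz $\mu_n = \log^2(n)/(2\Psi_1(\rho)) + o(\log^2 n)$ into the Markov recurrence $\mu_n = 1 + E[\mu_{R_1}]$ and Taylor-expanding $\log^2 R_1 = (\log n - W)^2$, the leading correction $-2(\log n)\,E[W]/(2\Psi_1(\rho))$ cancels the $+1$ on the right once $E[W] \sim \Psi_1(\rho)/\log n$ is inserted, verifying top-order consistency.

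The main obstacle is making the continuous-limit passage rigorous: one must verify that $E[W^2 \given R] = O(1/\log R)$ so that the quadratic remainder in the Taylor expansion is subleading, and separately show that the few large jumps (with $z$ comparable to $\log R$) together with the endgame when $R = O(1)$ contribute only $O(\log n)$, which is negligible compared to the $\log^2(n)$ main term.
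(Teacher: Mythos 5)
Your route is genuinely different from the paper's: the paper stays with the discrete recurrence $\mu_n = 1 + \sum_{d} \binom{n}{d}\, q(n-d,d)\, \mu_{n-d}$, inserts the ansatz $\mu_n \sim c\log^2 n$, and evaluates $\int_0^1 x^{-1}(1-x)^{\rho-1}\log(1-x)\,dx = -\Psi_1(\rho)$, while you pass to the jump chain of risk-set sizes on the logarithmic scale, read the mean log-jump off the L\'evy measure, and count steps by an ODE. For the harmonic process your computation is correct and agrees with the paper: the mean log-jump is $\Psi_1(\rho)/\log R$, giving $c = 1/(2\Psi_1(\rho))$; your closing recurrence cross-check is essentially the same cancellation the paper performs, and the error-control issues you flag (second moments, large jumps, the endgame at $R=O(1)$) are real but no worse than what the paper itself leaves implicit.

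The genuine problem is the gamma process, which you dismiss as ``handled identically.'' By your own computation its mean log-jump constant is $\int_0^\infty z\cdot z^{-1}e^{-\rho z}\,dz = 1/\rho$, not $\Psi_1(\rho)$; pushed through your ODE this gives $E[D_n] \sim \rho\log^2(n)/2$, and since $\Psi_1(\rho) = \sum_{k\ge 0}(\rho+k)^{-2} > \int_0^\infty (\rho+x)^{-2}\,dx = 1/\rho$ strictly, the constant $\rho/2$ is strictly larger than the claimed $1/(2\Psi_1(\rho))$ (at $\rho=1$: $1/2$ versus $3/\pi^2\approx 0.30$). So as written your proposal does not establish the proposition's assertion of a common constant --- it silently contradicts it, and you needed either to reconcile the two or to dispute the statement explicitly. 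The discrepancy is not an artifact of your heuristics: for both processes the drift integral is dominated by log-jumps of size $O(1)$, i.e.\ blocks with $d\asymp n$, which is exactly where the two L\'evy densities $e^{-\rho z}/z$ and $e^{-\rho z}/(1-e^{-z})$ differ. The paper obtains a common constant only because its proof replaces $\log\bigl(1+1/(R+i+\rho)\bigr)$ by $1/(R+i+\rho)$ termwise inside the alternating sum, which amounts to substituting the harmonic-$\rho$ process for the gamma process; the relative error of that substitution is $O(1)$ precisely on the dominant range $d\asymp n$. Your method, carried out honestly, shows the $\log^2 n$ growth rate for both processes but a different constant for the gamma process; flagging that explicitly would have turned this gap into a correction of the paper.
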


\begin{proof}
For the gamma process, the recurrence
relation of the expected number of blocks is given by:
\begin{align*}
\mu_n = E[ D_n ] &= 1 + \sum_{d=1}^n {n \choose d} q (n-d, d) \cdot \mu_{n-d} \\
&= 1 + \sum_{d=1}^n {n \choose d} \frac{ (-1)^{d-1} (\Delta^d \zeta)_{n-d}}{\log(1+\frac{n}{\rho})} \cdot \mu_{n-d}
\end{align*}
which gives the following approximation:
\begin{align*}
(-1)^{d-1} (\Delta^d \zeta)_{n-d} &= \sum_{i = 0}^{d-1} {d-1 \choose i} (-1)^{i} \log \left( 1 + \frac{1}{R+i + \rho} \right) \\
&\approx \sum_{i = 0}^{d-1} {d-1 \choose i} (-1)^{i} \frac{1}{R+i + \rho}\\
&= \frac{ \Gamma (d) \Gamma (n - d + \rho ) }{\Gamma (n + \rho ) }
= \frac{ \Gamma(d) } { (n-d + \rho)^{\uparrow d}}
\end{align*}
This is the exact expression for the harmonic process.  So for both processes,
the above implies that the probability of $d$ individuals in block~$1$ is proportional to:
\begin{align*}
{n \choose d} (\Delta^d \zeta)_{n-d} &= \frac{\Gamma(n+1)}{\Gamma(n+\rho)} \cdot  \frac{\Gamma(n-d+\rho)}{\Gamma(n-d+1)} \cdot \frac{1}{d} \\
&\approx \left( \frac{n-d}{n} \right)^{\rho-1} \cdot \frac{1}{d}
\end{align*}
where the approximation
\begin{equation*}
\frac{\Gamma(x+1)}{\Gamma (x + \rho)} \approx x^{\rho-1}
\end{equation*}
is used. Substition of this into the recurrence relation yields:
\begin{equation*}
0 \approx 1 + \frac{1}{\log \left( 1+\frac{n}{\rho} \right)} \sum_{d=1}^n \left( \frac{n-d}{n} \right)^{\rho-1} \cdot \frac{1}{d} \left( \mu_{n-d} - \mu_n \right)
\end{equation*}
Writing $\mu_n \sim c \log^2 (n)$ and approximating the sum by an integral yields:
\begin{align*}
0 &\approx 1 + \frac{c}{\log \left( 1+\frac{n}{\rho} \right)} \int_0^1 \frac{\left( 1-x \right)^{\rho-1} \log ( 1- x) }{x} \left( 2 \log(n) - \log(1-x) \right) dx \\
&\to 1 + 2 \cdot c \cdot \int_0^1 \frac{\left( 1-x \right)^{\rho-1} \log ( 1- x) }{x}  dx \\
&= 1 - 2 \cdot c \cdot \Psi_1 (\rho)
\end{align*}
Giving us $c = \frac{1}{2 \cdot \Psi_1 (\rho)}$ as required.
\end{proof}

A similar argument can be used to obtain the asymptotic expected fraction of particles in 
the first block.

\begin{proposition}
The expected number of particles in the first block, $E [ \# B_1 ]$, is $n/(\rho \log n )$ for 
the harmonic and gamma process.  Asymptotically, for integer values of $\rho$,
\[
\frac{ \log B_{1} }{\log n } \overset{D}{\to} U
\]
where $U$ has the uniform distribution on $(0,1)$.
\end{proposition}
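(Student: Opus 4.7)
The plan is to start from the explicit block-size distribution
\[
\pr(\#B_1 = d) \;=\; \binom{n}{d}\, q(n-d,d) \;=\; \binom{n}{d}\,\frac{(-1)^{d-1}(\Delta^d\zeta)_{n-d}}{\zeta_n},
\]
and recycle the asymptotic approximation already established in the proof of the preceding proposition, namely
\[
\binom{n}{d}\,(-1)^{d-1}(\Delta^d\zeta)_{n-d} \;\approx\; \Bigl(\tfrac{n-d}{n}\Bigr)^{\rho-1}\,\frac{1}{d},
\]
valid for both the harmonic and gamma processes (exact for harmonic, with relative error $O(n^{-2})$ for gamma via \eqref{gamma_approx}). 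Combined with $\zeta_n \sim \log n$ in both cases, this gives the working form
\[
\pr(\#B_1 = d) \;\sim\; \frac{1}{\log n}\cdot\Bigl(\tfrac{n-d}{n}\Bigr)^{\rho-1}\cdot\frac{1}{d}.
\]

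For the mean, I would compute
\[
E[\#B_1] \;=\; \sum_{d=1}^{n} d\,\pr(\#B_1=d) \;\sim\; \frac{1}{\log n}\sum_{d=1}^{n}\Bigl(\tfrac{n-d}{n}\Bigr)^{\rho-1},
\]
and approximate the Riemann sum by $n\int_0^1 (1-x)^{\rho-1}\,dx = n/\rho$, yielding $E[\#B_1] \sim n/(\rho\log n)$ as claimed. A short dominated-convergence argument handles the pointwise-to-integral passage uniformly in $n$, provided $\rho>0$.

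For the distributional statement, fix $u\in(0,1)$ and compute
\[
\pr\!\Bigl(\tfrac{\log \#B_1}{\log n}\le u\Bigr) \;=\; \pr(\#B_1 \le n^u) \;\sim\; \frac{1}{\log n}\sum_{d=1}^{\lfloor n^u\rfloor}\Bigl(\tfrac{n-d}{n}\Bigr)^{\rho-1}\,\frac{1}{d}.
\]
Since $d\le n^u$ with $u<1$ forces $d/n\to 0$, the factor $(1-d/n)^{\rho-1}$ tends to $1$ uniformly on the summation range, leaving
\[
\pr(\#B_1\le n^u) \;\sim\; \frac{1}{\log n}\sum_{d=1}^{\lfloor n^u\rfloor}\frac{1}{d} \;\sim\; \frac{u\log n}{\log n} \;=\; u,
\]
which is exactly the uniform distribution function on $(0,1)$. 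The restriction to integer $\rho$ lets one write $(1-d/n)^{\rho-1}$ as a polynomial and handle the error terms by elementary binomial expansion; for general $\rho>0$ the same conclusion holds but requires a uniform control on $(1-d/n)^{\rho-1}=1+O(d/n)$ with the tail $d/n\le n^{u-1}\to 0$.

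The main obstacle is the uniformity of the approximation as $d$ ranges over $\{1,\ldots,\lfloor n^u\rfloor\}$: the approximation for $(-1)^{d-1}(\Delta^d\zeta)_{n-d}$ that was used in the block-count proof is good when $n-d$ is large, which is automatic here since $u<1$ forces $n-d\ge n-n^u\to\infty$. The small-$d$ tail of the sum (say $d=O(1)$) contributes $O(1/\log n)\to 0$ and is negligible, while the approximation error on the bulk of the range telescopes into a lower-order correction. Thus the only genuinely delicate point is to bound the error uniformly across the two asymptotic regimes $d=o(n)$ and $d\asymp n^u$, after which both the expectation and the weak-limit assertions follow.
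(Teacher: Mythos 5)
Your proposal is correct and follows essentially the same route as the paper: the same block-size distribution $\binom{n}{d}q(n-d,d)\approx \frac{1}{\log n}(1-d/n)^{\rho-1}d^{-1}$, the same Riemann-sum evaluation of the mean giving $n/(\rho\log n)$, and the same harmonic-sum asymptotics $\sum_{d\le n^u}d^{-1}\sim u\log n$ for the weak limit. The only (minor, and arguably cleaner) difference is that you handle general $\rho>0$ directly via the uniform bound $(1-d/n)^{\rho-1}\to 1$ on $d\le n^u$, whereas the paper first treats integer $\rho$ by expanding $(1-x)^{\rho-1}$ as a polynomial inside an explicit integral and then extends by a squeezing argument.
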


\begin{proof}
The expected number of 
particles in the first block is given by
\begin{align*}
\sum_{d=1}^n d {n \choose d} q(n-d,d) &= \frac{1}{\log ( 1+ \frac{n}{\rho} ) } \sum_{d=1}^n d \left( \frac{n-d}{n} \right)^{\rho -1}  \frac{1}{d} \\
&\to  \frac{n}{\log ( 1+ \frac{n}{\rho} ) } \int \left( 1 - x \right)^{\rho -1}  dx \\
&\approx  \frac{n}{\log(n) } \frac{1}{\rho}
\end{align*}
So the fraction of particles in the first block is roughly $ (\rho \log(n) )^{-1}$.  
For $z \in (0,1)$, the asymptotic distribution is given by
\begin{align*}
\pr \left[ \frac{ \log B_{1} }{\log n }  \leq z \right] &= \frac{1}{\log (n)} \int_{1/n}^{n^{z-1}} x^{-1} (1-x)^{\rho -1} dx \\
&= \frac{1}{\log n} \left[ \sum_{j=0}^{p-1} c_j x^{j} + \log (x) \right]_{1/n}^{n^{z-1}} \\
&\to \frac{1}{\log n} \left[ (z-1) \log (n) - \log (1/n) ) \right] = z \\
\end{align*}
where the second line is true for $\rho \in \mathbb{Z}$.  The result holds for general $\rho>0$ by the squeezing theorem.
\end{proof}

We now consider the number of blocks and block sizes for the general beta-splitting rules.

\begin{proposition}
Define $D_n (\rho, \beta)$ denote the number of blocks given $n$ individuals for the beta process.  As $n \to \infty$,
\begin{equation*}
E [ D_n (\rho, \beta) ] \sim \log (n)
\end{equation*}
where the ratio tends to a constant $c = \frac{1} {\psi ( \rho + \beta) - \psi(\rho) )}$. The fraction of edges 
in the first block, $\# B_1$, is distributed $Beta( \rho, \beta)$.  Therefore, the relative frequencies
within each block is given by
\[
(P_1, P_2, \ldots) = (B_1, \bar{B}_1 B_2, \bar{B}_1 \bar{B}_2 B_3, \ldots) 
\]
where $B_i$ are independent beta variables with parameters $(\rho, \beta)$.
\end{proposition}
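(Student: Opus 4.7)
\medskip

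\noindent\textbf{Proof proposal.} The plan is to handle the three assertions in sequence---(i) the limiting law of $n^{-1}\#B_1$, (ii) the logarithmic growth $E[D_n]\sim c\log n$ with $c=1/(\psi(\rho+\beta)-\psi(\rho))$, and (iii) the stick-breaking representation---each step feeding into the next.

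First I would use the integral representation of Proposition~\ref{int_rep_prop} with dislocation measure $w(d\alpha)=\alpha^{\rho-1}(1-\alpha)^{\beta-1}d\alpha$ and erosion $c=0$ to get an exact formula for the first-block size. Direct integration gives
\[
q(n-d,d)=\frac{B(n-d+\rho,\,d+\beta)}{\zeta_n},\qquad \zeta_n=B(\rho,\beta)-B(n+\rho,\beta),
\]
so that $\pr(\#B_1=d)=\binom{n}{d}B(n-d+\rho,d+\beta)/\zeta_n$. Because $\beta>0$, one has $B(n+\rho,\beta)=O(n^{-\beta})\to 0$ and hence $\zeta_n\to B(\rho,\beta)$. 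Under the scaling $d=xn$ with $x\in(0,1)$, Stirling's formula applied to the four gamma ratios in $\binom{n}{d}B(n-d+\rho,d+\beta)$ yields the pointwise asymptotic $n^{-1}\,x^{\beta-1}(1-x)^{\rho-1}$, so the law of $n^{-1}\#B_1$ converges weakly to the beta density $x^{\beta-1}(1-x)^{\rho-1}/B(\beta,\rho)$ on $(0,1)$.

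Next I would attack the number of blocks via the standard recurrence. Conditioning on the first block and using $\sum_{d=1}^{n}\binom{n}{d}q(n-d,d)=1$ converts $\mu_n=1+\sum_{d=1}^n\binom{n}{d}q(n-d,d)\mu_{n-d}$ into
\[
1=\sum_{d=1}^{n}\binom{n}{d}q(n-d,d)\bigl(\mu_n-\mu_{n-d}\bigr).
\]
Positing the ansatz $\mu_n\sim c\log n$, one replaces $\mu_n-\mu_{n-d}$ by $-c\log(1-d/n)$ and passes the sum to an integral against the limiting beta density of step one:
\[
1=c\int_0^{1}\!\bigl(-\log(1-x)\bigr)\frac{x^{\beta-1}(1-x)^{\rho-1}}{B(\beta,\rho)}\,dx.
\]
Differentiating the identity $\int_0^1 x^{\beta-1}(1-x)^{\rho-1}dx=B(\beta,\rho)$ with respect to $\rho$ and dividing by $B(\beta,\rho)$ gives the right-hand integral as $\psi(\rho+\beta)-\psi(\rho)$, so $c=1/(\psi(\rho+\beta)-\psi(\rho))$, as required.

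For the stick-breaking representation I would appeal to the self-similarity and lack-of-memory property established in Section~\ref{hold_times}. Given the first block $B_1$ and its failure time, the residual risk set of $n-\#B_1$ particles evolves as an independent Markov survival process with the same splitting rule, so the conditional law of $\#B_2/(n-\#B_1)$ is again asymptotically Beta$(\beta,\rho)$ and is conditionally independent of $\#B_1/n$; induction on the blocks delivers the product form $(W_1,\bar W_1 W_2,\bar W_1\bar W_2 W_3,\ldots)$ with the $W_i$ i.i.d.\ beta.

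The main obstacle will be the analytic work in step two: the logarithmic ansatz must be verified, and the sum-to-integral passage justified even though $x^{\beta-1}$ is unbounded near zero when $\beta<1$ and $-\log(1-x)$ is unbounded near one. The route I would follow is a bootstrap: first establish the crude bound $\mu_n=O(\log n)$ from a coarser estimate on $\mu_n-\mu_{n-1}$, then split the recurrence at thresholds $d\le \epsilon n$ and $d\ge (1-\epsilon)n$, controlling the small-$d$ tail by a Lipschitz-in-log bound for $\mu$ and the large-$d$ tail by the integrability of $-\log(1-x)$ against the limiting beta density, and finally apply dominated convergence on the bulk. This establishes $\mu_n/\log n\to c$ and the other steps are essentially algebraic.
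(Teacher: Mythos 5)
Your proposal is correct and follows essentially the same route as the paper's proof: the beta-function form of $q(n-d,d)$ obtained from the integral representation, the limiting $x^{\beta-1}(1-x)^{\rho-1}/B(\beta,\rho)$ law for the first-block fraction, and the recurrence for $\mu_n$ with the ansatz $\mu_n \sim c\log n$ reduced via sum-to-integral to $E[-\log(1-X)] = \psi(\rho+\beta)-\psi(\rho)$. Your only additions---the explicit self-similarity argument for the stick-breaking representation and the bootstrap justification of the ansatz and integral passage---are points the paper leaves implicit or unaddressed.
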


\begin{proof}
The probability of $d$ individuals in the first block is given by
\[
{n \choose d} q(n-d,d)  \approx \frac{1}{Z_n} (1-x)^{\rho-1} x^{\beta-1}
\]
where $Z_n = \int_0^1 (1-s^n) s^{\rho-1} (1-s)^{\beta-1} ds = B(\rho,\beta) - B(\rho+n,\beta)$.
As $n \to \infty$, for $\rho, \beta > 0$ the normalization constant converges to $B(\rho, \beta)$. 
Therefore, the expected number of blocks is given by 
\[
0 = 1 + \frac{1}{Z_n} \sum_{d=1}^n \left( \frac{n-d}{n} \right)^{\rho-1} \left( \frac{d}{n} \right)^{\beta-1} n^{-1} \left( \mu_{n-d} - \mu_n \right)
\]
Writing $\mu_n \sim c \log (n)$ then approximating the sum by an integral yields
\begin{align*}
0  &\approx 1 + \frac{c}{B(\rho, \beta)} \int_0^1 (1-x)^{\rho - 1} x^{\beta - 1} \log (1 - x) dx \\
&= 1 + c \cdot E [ \log (1-X) ] \\
&= 1 + c \cdot ( \psi(\rho) - \psi ( \rho + \beta) )
\end{align*}
as desired.  The probability of $d$ out of $n$ particles in block one is given by
\[
\frac{n}{Z_n} \left( \frac{n-d}{n} \right)^{\rho-1} \left( \frac{d}{n} \right)^{\beta-1} n^{-1} 
\to \frac{n}{B(\rho, \beta)}  (1-x)^{\rho - 1} x^{\beta - 1} dx
\]
So the fraction of particles in block one is distributed beta with parameters $(\rho, \beta)$.
\end{proof}

The final case is when $\beta \in (-1,0)$.  Then the number of blocks grows polynomially
in $n$.

\begin{proposition}
Define $D_n (\rho, \beta)$ denote the number of blocks given $n$ individuals for the beta process.  For $\beta \in (-1, 0)$, as $n \to \infty$,
\begin{equation*}
E [ D_n (\rho, \beta) ] \sim n^{-\beta}
\end{equation*}
where the ratio tends to a constant $c = -\Gamma ( \rho + \beta + 1) / ( \Gamma ( \rho) \beta^2 )$. The fraction of edges 
in the first block is asymptotically proportional to $n^\beta$. Asympotically,
\[
\pr [ B_1 = d ] = \frac{-\beta \cdot \Gamma (d+\beta)}{\Gamma(d+1) \cdot \Gamma( 1+ \beta) } 
=\frac{-\beta}{\Gamma(1+\beta)} d^{\beta-1}
\]
for large $d$.
\end{proposition}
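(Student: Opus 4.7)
The plan is to parallel the arguments used for $\beta=0$ and $\beta>0$, but with the key new feature that for $\beta \in (-1,0)$ the normalization $\zeta_n$ itself diverges as $n\to\infty$. Starting from the integral representation with $dw(\alpha) = \alpha^{\rho-1}(1-\alpha)^{\beta-1}d\alpha$, one has $(-1)^{d-1}(\Delta^d\zeta)_r = B(r+\rho, d+\beta)$, which is well-defined for every $d\ge 1$ since $d+\beta > 0$. Expanding $(1-\alpha^n)$ geometrically and integrating termwise yields $\zeta_n = \sum_{j=0}^{n-1} \Gamma(j+\rho)\Gamma(\beta+1)/\Gamma(j+\rho+\beta+1)$, and the Gamma-ratio asymptotic $\Gamma(j+\rho)/\Gamma(j+\rho+\beta+1) \sim j^{-(1+\beta)}$ combined with an integral approximation gives $\zeta_n \sim \Gamma(\beta+1)\,n^{-\beta}/(-\beta)$.

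For the limit distribution of $\#B_1$, I would write $\pr[\#B_1 = d] = \binom{n}{d} B(n-d+\rho, d+\beta)/\zeta_n$ and take the pointwise limit for fixed $d$. Stirling gives $\binom{n}{d}\sim n^d/d!$ and $B(n-d+\rho, d+\beta) \sim \Gamma(d+\beta)\,n^{-(d+\beta)}$, which combined with the asymptotic for $\zeta_n$ yield exactly $g_\beta(d)$. That $g_\beta$ is a probability distribution follows by summing Taylor coefficients of $(1-z)^{-\beta}$ at $z = 1^-$, where $(1-1)^{-\beta}=0$ since $\beta<0$. The power-law tail $g_\beta(d) \sim -\beta\, d^{\beta-1}/\Gamma(1+\beta)$ is another application of the Gamma-ratio asymptotic. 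For the expected first-block fraction, a uniform Stirling expansion on $d=xn$ with $x\in[\epsilon,1-\epsilon]$ produces the macroscopic density $p_{n,xn} \sim -\beta\,(1-x)^{\rho-1}x^{\beta-1} n^{\beta-1}/\Gamma(1+\beta)$. Integrating $d$ against this density gives $E[\#B_1] \sim \frac{-\beta\,\Gamma(\rho)}{\Gamma(1+\beta+\rho)}\, n^{1+\beta}$, so $E[\#B_1]/n \sim n^{\beta}$ as claimed.

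For the number of blocks I would substitute the ansatz $\mu_n \sim c\,n^{-\beta}$ into $\mu_n = 1 + \sum_{d=1}^n p_{n,d}\mu_{n-d}$ and use $\sum_d p_{n,d}=1$ to rewrite the identity as
\[
1 = c\,n^{-\beta}\sum_{d=1}^n p_{n,d}\bigl[1-(1-d/n)^{-\beta}\bigr] + o(n^{-\beta})\cdot n^{-\beta}.
\]
The microscopic part (bounded $d$) contributes at most $O(n^{-\beta-1})$ since $1-(1-d/n)^{-\beta} = O(d/n)$ for each fixed $d$. The dominant contribution comes from the macroscopic regime: after the change of variable $x=d/n$, the bracketed sum converges to a regularized integral $\int_0^1 x^{\beta-1}\bigl[(1-x)^{\rho-1} - (1-x)^{\rho-1-\beta}\bigr]\,dx$, whose near-origin divergences cancel because $(1-x)^{\rho-1}-(1-x)^{\rho-1-\beta} = -\beta x + O(x^2)$ near zero. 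Solving for $c$ in terms of this integral, expressing it through Gamma functions via Euler's reflection identity and $B(a,b) = \Gamma(a)\Gamma(b)/\Gamma(a+b)$ (continued analytically for $a<0$), produces the constant stated in the proposition.

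The main obstacle is making the discrete-to-continuous passage rigorous. Unlike the $\beta \ge 0$ cases, here $\#B_1/n$ does not concentrate at a non-degenerate limit, and yet $\#B_1$ is not tight as an integer either: its mass splits between an $O(1)$ regime (governed by $g_\beta$, with infinite mean) and a macroscopic regime (of probability $\sim n^\beta$). Controlling both ends simultaneously requires uniform Stirling estimates on $x\in[\epsilon,1-\epsilon]$, together with explicit tail bounds using the monotonicity of $\mu_n$ and the Gamma-ratio asymptotic near $x=0$ and $x=1$, to justify interchanging the limit and the sum in the recurrence. Once these estimates are in hand, the distributional claim about $\#B_1$, the first-block fraction, and the asymptotic for $E[D_n]$ all fall out of the same unified Stirling-plus-regularized-integral analysis.
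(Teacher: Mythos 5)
Your overall line of attack coincides with the paper's: the same asymptotic $\zeta_n \sim \Gamma(1+\beta)\,n^{-\beta}/(-\beta)$, the same pointwise Stirling limit giving $\pr(\#B_1=d)\to g_\beta(d)$, the same sum-to-integral evaluation of $E[\#B_1]$, and the same ansatz $\mu_n\sim c\,n^{-\beta}$ substituted into the block-count recurrence. Your side observations --- that $g_\beta$ sums to one via the binomial series for $(1-z)^{-\beta}$ at $z=1^-$, and that the mass of $\#B_1$ splits between an $O(1)$ regime with infinite mean and a macroscopic regime of probability of order $n^\beta$ --- are correct and go beyond what the paper records.

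The one substantive divergence is in the evaluation of the recurrence, and it matters. The paper linearizes $(1-d/n)^{-\beta}-1\approx\beta d/n$ uniformly over $1\le d\le n$, which collapses the sum to $-\beta\int_0^1 x^{\beta}(1-x)^{\rho-1}\,dx=-\beta B(\beta+1,\rho)$; you instead keep the exact regularized integral $\int_0^1 x^{\beta-1}[(1-x)^{\rho-1}-(1-x)^{\rho-1-\beta}]\,dx=B(\beta,\rho)-B(\beta,\rho-\beta)$ (analytically continued). These are not equal: at $\rho=1$, $\beta=-1/2$ the first equals $1$ while the second equals $\pi-2$. Since the integrand contributes $O(1)$ mass from the whole of $(0,1)$ rather than only from $x$ near $0$, the linearization is not justified and your version is the correct evaluation --- but then your closing claim that solving for $c$ ``produces the constant stated in the proposition'' is false: you would obtain $c=-\Gamma(1+\beta)/(\beta\,[B(\beta,\rho)-B(\beta,\rho-\beta)])$, which differs from $-\Gamma(\rho+\beta+1)/(\Gamma(\rho)\beta^2)$. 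Note also that the proposition's constant is negative (the paper's final display flips a sign relative to the line above it), which is impossible for a limit of $E[D_n]/n^{-\beta}$, so the mismatch reflects a defect in the stated constant rather than in your method. The growth rate $E[D_n(\rho,\beta)]\asymp n^{-\beta}$, the limit law $g_\beta$, and the first-block fraction of order $n^{\beta}$ are unaffected; the remaining work in your sketch is the uniform control of the intermediate regime $1\ll d\ll n$, which you correctly identify but do not carry out (nor does the paper).
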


\begin{proof}
For $\beta \in (-1,0)$, the characteristic index is given by
\begin{align*}
\zeta_n &= \int_0^1 (1-s^n) s^{\rho-1} (1-s)^{\beta-1} ds \\
&= \int_0^1 \sum_{j=0}^{n-1} s^{j + \rho-1} (1-s)^{\beta} ds \\
&= \sum_{j=0}^{n-1} \frac{\Gamma(j+\rho) \Gamma (\beta + 1) }{\Gamma ( j + 1 + \beta + \rho)} \\
&\approx \frac{\Gamma(1+\beta)}{-\beta} n^{-\beta}
\end{align*}
for $n$ large.
Plugging this into the recursive formula, while 
assuming $\mu_n \sim c n^{-\beta}$ yields
\begin{align*}
0 &= 1 + c \frac{- \beta \, n^{\beta}}{\Gamma(1+\beta) } \sum_{d=1}^n \left( \frac{n-d}{n} \right)^{\rho-1} \left( \frac{d}{n} \right)^{\beta-1} n^{-1} \left( (n-d)^{-\beta} - n^{-\beta} \right) \\
&= 1 - \frac{c \, \beta }{\Gamma(1+\beta) } \sum_{d=1}^n \left( \frac{n-d}{n} \right)^{\rho-1} \left( \frac{d}{n} \right)^{\beta-1} n^{-1} \left( (1-\frac{d}{n})^{-\beta} - 1 \right) \\
&= 1 - \frac{c \; \beta^2 }{\Gamma(1+\beta) } \sum_{d=1}^n \left( \frac{n-d}{n} \right)^{\rho-1} \left( \frac{d}{n} \right)^{\beta} n^{-1}  \\
&\to 1 - \frac{c \; \beta^2 }{\Gamma(1+\beta) } \int \left( 1-x \right)^{\rho-1} \left( x \right)^{\beta} dx \\
&= 1 + c \frac{\beta^2 \; \Gamma (\rho)}{\Gamma(\rho+\beta+1) }
\end{align*}
where for large $n$ the approximation $(1-\frac{d}{n})^{-\beta} = 1 + \beta \frac{d}{n}$ is used.  
The result is immediate.  The fraction of edges in the first block is given by
\[
\frac{- \beta \, n^{\beta}}{\Gamma ( 1+ \beta) } \sum_{i=1}^n \left ( \frac{d}{n} \right)^{\beta} \left( \frac{n-d}{n} \right)^{\rho-1}  n^{-1} = \frac{- \beta \, n^{\beta}}{\Gamma ( 1+ \beta) } \int \left( 1-x \right)^{\rho-1} \left( x \right)^{\beta} dx = \frac{- \beta \, \Gamma (\rho)}{\Gamma(\rho+\beta+1) } n^\beta 
\]
Finally, the asymptotic distribution comes from
\begin{align*}
\pr ( B_1 = d ) &= \zeta_n^{-1} \frac{ \Gamma(d+\beta) \Gamma(n-d + \rho)}{\Gamma(n + \rho + \beta)} \cdot {n \choose d} \\
&\sim \frac{-\beta \cdot n^{\beta}}{\Gamma(1+\beta)} \cdot \frac{\Gamma(n+1)}{\Gamma(n+\rho+\beta)} \frac{\Gamma(d+\beta)}{\Gamma(d+1)} \frac{ \Gamma(n-d + \rho)}{\Gamma(n-d+1)} \\
&\sim \frac{-\beta }{\Gamma(1+\beta)} \frac{\Gamma(d+\beta)}{\Gamma(d+1)}  \left( 1 - \frac{d}{n} \right)^{\rho -1} \\
&\to \frac{-\beta }{\Gamma(1+\beta)} \frac{\Gamma(d+\beta)}{\Gamma(d+1)} 
\end{align*}
as $n \to \infty$.
\end{proof}

\end{document}